\newtheorem{theorem}{Theorem}[section]
\newtheorem{proposition}[theorem]{Proposition}
\newtheorem{corollary}[theorem]{Corollary}
\theoremstyle{definition}
\newtheorem{definition}[theorem]{Definition}
\newtheorem{example}[theorem]{Example}
\newcommand{\Span}{\operatorname{span}}
\begin{document}

\title[New Characterizations of Fusion Bases and Riesz Fusion Bases]
{New Characterizations of Fusion Bases and\\Riesz Fusion Bases in Hilbert Spaces}

\author[M. S. Asgari]{Mohammad Sadegh Asgari}
\address{Department of Mathematics, Faculty of Science,
Islamic Azad University, Central Tehran Branch,
 P.O.Box 13185/768, Tehran,Iran.}
\curraddr{}
\email{msasgari@yahoo.com \;\;\;\; moh.asgari@iauctb.ac.ir}
\thanks{}
\thanks{}
\subjclass[2000]{Primary 42C15; Secondary 46C99.}
\keywords{Fusion Frame; Riesz fusion basis; Exact fusion frame; Orthonormal fusion basis;
Besselian fusion frame.}
\date{\today}
\dedicatory{Department of Mathematics, Faculty of Science,\\Islamic Azad University, Central Tehran Branch,
 Tehran, Iran.\\moh.asgari@iauctb.ac.ir ; msasgari@yahoo.com}

\begin{abstract}

In this paper we investigate a new notion of bases in Hilbert spaces and similar to fusion frame theory
we introduce fusion bases theory in Hilbert spaces. We also introduce a new definition of fusion dual
sequence associated with a fusion basis and show that the operators of a fusion dual sequence are
continuous projections. Next we define the fusion biorthogonal sequence, Bessel fusion basis, Hilbert
fusion basis and obtain some characterizations of them. we study orthonormal fusion systems and Riesz
fusion bases for Hilbert spaces. we consider the stability of fusion bases under small perturbations.
We also generalized a result of Paley-Wiener [13] to the situation of fusion basis.
\end{abstract}

\maketitle

\section{Introduction}

Frames for Hilbert spaces were first formally defined by Duffin and Schaeffer [6] in 1952
to study some deep problems in nonharmonic Fourier series,
reintroduced in 1986 by Daubechies, Grossmann and Meyer [5] and popularized from
then on. A frame is a redundant set of vectors in a Hilbert space with the property that provide
usually non-unique representations of vectors in terms of the frame elements.
Fusion frames which were considered recently as generalized frames in Hilbert spaces,
were introduced by Casazza and Kutyniok in [3] and have quickly turned into an industry.
Related approaches with a different focus were undertaken by M.Fornasier [7] and W.Sun [12], D. R. Larson et al.[10]. Bases, frames
and fusion frames play important roles in many applications in mathematics, science, and engineering, including coding theory,
filter bank theory, sigma-delta quantization, signal and image processing and wireless communications and many other areas.\par
The main subject of this paper deals with fusion bases and resolution of the identity. The paper is organized as follows: Section 2,
contains a new definition of fusion basis in a Hilbert space. In this section similar to basis theory we first establishes a simple
criterion for determining when a complete set of closed subspaces is a fusion basis. Next we introduce the concepts of fusion biorthogonal
sequence, Bessel fusion basis, Hilbert fusion basis and obtain some characterizations of them. In Section 3, we study orthonormal fusion bases
and Riesz fusion bases for Hilbert spaces. We introduce a new definition of Riesz fusion basis and then give some characterizations of
orthonormal fusion bases and Riesz fusion bases. In Section 4, we study the stability of fusion bases under small perturbations.
we also generalized a result of Paley-Wiener [13] to the situation of fusion basis.\par
Throughout this paper, $\mathcal{H},\mathcal{K}$ are separable Hilbert spaces and $I, I_j, J$ denote the countable (or finite) index sets and
$\pi_{W}$ denote the orthogonal projection of a closed subspace $W$ of $\mathcal{H}$. We will always use $\mathcal{R}_T$ and $\mathcal{N}_T$
to denote range and the null spaces of an operator $T\in B(\mathcal{H},\mathcal{K})$ respectively.\par Let $\mathcal{W}=\{W_j\}_{j\in J}$ be a
sequence of closed subspaces in $\mathcal{H}$, and let $\mathcal{A}=\{\alpha_j\}_{j\in J}$ be a family of weights, i.e., $\alpha_j >0 $ for all
$j\in J$. A sequence $\mathcal{W}_{\alpha}=\{(W_j,\alpha_j)\}_{j\in J}$ is a fusion frame, if there exist real numbers $0<C\leq D<\infty$ such
that,\begin{equation} C\|f\|^2 \leq\sum_{j\in J}\alpha_j^2 \|\pi_{W_j} (f)\|^2 \leq D\|f\|^2 \;\;\;\;\; \text{for all $f\in \mathcal{H}$}.
\end{equation} The constant $C,D$ are called the fusion frame bounds. If $C=D=\lambda,$ the fusion frame is $\lambda$-tight and it is a Parseval
fusion frame if $C=D=1$, and it is $\alpha$-uniform if $\alpha=\alpha_i=\alpha_j$ for all $i,j\in J$. If the right-hand inequality of $(1)$ holds,
then we say that $\mathcal{W}_{\alpha}$ is a Bessel fusion sequence with Bessel fusion bound $D$.\par For each sequence $\{W_j\}_{j\in J}$ of
closed subspaces of $\mathcal{H}$, we define the Hilbert space associated with $\mathcal{W}$ by \begin{equation} \Big(\sum_{j\in J}\oplus W_j\Big)
_{\ell^2}=\Big\{ \{g_j\}_{j\in J} | g_j\in W_j  \;\; \text{and}\;\;\sum_{j\in J} \|g_j\|^2 < \infty \Big\}. \end{equation} with inner product given by
\begin{equation} <\{f_k\}_{k\in J}, \{g_k\}_{k\in J} >=\sum_{j\in J} <f_j, g_j>.\end{equation} For more details about the theory and application
of bases, frames and fusion frames we refer the reader to the books by Young [13], Christensen [4], the survey articles by Asgari [1,2], Casazza [3],
Gavruta [8], and Holub [9].

\section{Fusion Schauder bases}

The concept of Riesz decomposition that we call fusion Schauder basis, was first introduced by Casazza and Kutyniok in [3]. In this section,
we develop the fusion basis theory for Hilbert spaces. As a consequence we generalized some results of bases to fusion bases.
\begin{definition} Let $\mathcal{W}=\{W_j\}_{j\in J}$ be a sequence of closed subspaces of $\mathcal{H}$, then $\mathcal{W}=\{W_j\}_{j\in J}$
is called a fusion Schauder basis or simply a $f$-basis for $\mathcal{H}$ if for any $f\in \mathcal{H}$ there exists an unique sequence
$\{g_j:\;g_j\in W_j\}_{j\in J}$ such that\begin{equation} f=\sum_{j\in J}g_j,\end{equation} with the convergence being in norm. If the
series (4) converges unconditionally for each $f\in \mathcal{H},$ we say that $\mathcal{W}$ is an unconditional $f$-basis.\end{definition}
\begin{example} For each $N\in\mathbb{N}$, let $\mathcal{H}=\mathbb{C}^{N}$ and let $\{e_i\}_{i=1}^{N}$ be the standard basis of $\mathbb{C}^{N}$.
If the subspace $W_j\subset\mathcal{H}$ defined by $$W_j=\Span\{\sum_{i=1, i\neq j}^{N}e_i\},$$ for all $1\leq j\leq N$. Then $\{W_j\}_{j=1}^{N}$
is a $f$-basis for $\mathcal{H}$.\end{example}
\begin{example} Let $\mathcal{H}=\ell^2(\mathbb{N})$ and let, $\{e_i\}_{i\in\mathbb{N}}$ be the standard basis of $\ell^2(\mathbb{N})$.
For each $j\in\mathbb{N}$ define the subspace $W_j\subset\mathcal{H}$ by $W_j=\Span\{e_{2j-1}, e_{2j}\}$. Then $\{W_j\}_{j\in\mathbb{N}}$ is a
$f$-basis for $\mathcal{H}$, because if $f=\sum_{j\in\mathbb{N}}\lambda_{j}e_{2j-1}+\mu_{j}e_{2j}$ then it is easy to check
that $\lambda_{j}=<f, e_{2j-1}>$ and $\mu_j=<f, e_{2j}>$.\end{example}
\begin{theorem} Let $\{W_j\}_{j\in J}$ be a $f$-basis for $\mathcal{H}$. Then $\dim\mathcal{H}=\sum_{j\in J}\dim W_j$.\end{theorem}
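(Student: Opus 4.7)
The plan is to choose an orthonormal basis for each $W_j$, assemble them into a single set $E \subset \mathcal{H}$, verify that $E$ is linearly independent and has dense linear span, and then conclude by a cardinality argument. Since each $W_j$ is a closed subspace, it is itself a Hilbert space and admits an orthonormal basis $\{e_{j,i}\}_{i \in I_j}$ with $\lvert I_j\rvert = \dim W_j$. Take the $I_j$ to be disjoint and set $E = \{e_{j,i} : j \in J,\ i \in I_j\}$, so that $\lvert E\rvert = \sum_{j \in J}\dim W_j$.

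For totality, given $f \in \mathcal{H}$, the $f$-basis property produces a unique expansion $f = \sum_{j \in J} g_j$ with $g_j \in W_j$. Each $g_j$ is the norm limit of finite linear combinations of $\{e_{j,i}\}_{i \in I_j}$ (being an element of a Hilbert space with that orthonormal basis), so $f$ lies in the closed linear span of $E$. For linear independence, suppose $\sum_{(j,i) \in F} c_{j,i}\, e_{j,i} = 0$ for some finite index set $F$. Group the terms according to $j$, setting $h_j = \sum_i c_{j,i}\, e_{j,i} \in W_j$ (and $h_j = 0$ otherwise). Then $\sum_j h_j = 0$, and uniqueness of the $f$-basis representation of $0$ forces each $h_j = 0$; orthonormality of $\{e_{j,i}\}_i$ in $W_j$ then gives $c_{j,i} = 0$ for every $(j,i) \in F$.

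It remains to pass from ``total and linearly independent'' to a statement about $\dim \mathcal{H}$. In the finite-dimensional case this is immediate from elementary linear algebra: $E$ must be a Hamel basis, so $\lvert E\rvert = \dim \mathcal{H}$. In the infinite-dimensional (hence separable) case, $E$ is countable and, after a well-ordering, Gram--Schmidt produces an orthonormal system $\{v_n\}$ with the same closed linear span as $E$, namely all of $\mathcal{H}$. Thus $\{v_n\}$ is an orthonormal basis and $\dim\mathcal{H} = \lvert\{v_n\}\rvert = \lvert E\rvert = \sum_{j \in J} \dim W_j$.

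The only nontrivial point is the cardinality step, which is essentially the standard fact that any linearly independent total subset of a Hilbert space has cardinality equal to the Hilbert dimension; everything else is a direct translation of the existence and uniqueness clauses in Definition~2.1 to properties of the vectors $e_{j,i}$.
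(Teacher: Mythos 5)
Your proof is correct and takes essentially the same route as the paper's: both assemble orthonormal bases of the individual $W_j$ into one system $\{e_{j,i}\}$ and deduce the dimension count from the existence and uniqueness of the $f$-basis expansion. You are somewhat more careful at the final step, replacing the paper's bare assertion that the combined system is a basis of $\mathcal{H}$ with an explicit verification of linear independence and totality plus the standard cardinality argument.
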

\begin{proof} Let $\{e_{ij}\}_{i\in J_j}$ be an orthonormal basis for $W_j$ for all $j\in J$. We show that $\{e_{ij}\}_{j\in J, i\in J_j}$
is a basis for $\mathcal{H}$. Since $\{e_{ij}\}_{i\in J_j}$ is an orthonormal basis for $W_j$, hence every $g_j\in W_j$ has a unique
expansion of the form $g_j=\sum_{i\in J_j}<g_j, e_{ij}>e_{ij}$. This implies that also every $f\in\mathcal{H}$ has a unique expansion
of the form $$f=\sum_{j\in J}\sum_{i\in J_j}<g_j, e_{ij}>e_{ij}.$$ This shows that $\dim\mathcal{H}=\sum_{j\in J}\dim W_j$.\end{proof}
\begin{corollary} Let $\{W_j\}_{j\in J}, \{V_i\}_{i\in I}$ be $f$-bases for $\mathcal{H}$. Then $\sum_{j\in J}\dim W_j=\sum_{i\in I}\dim V_i$.
\end{corollary}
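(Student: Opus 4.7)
The plan is that this corollary is a direct consequence of Theorem 2.4 and requires essentially no new work. I would apply Theorem 2.4 twice: once to the $f$-basis $\{W_j\}_{j\in J}$ to conclude $\sum_{j\in J}\dim W_j = \dim\mathcal{H}$, and once to the $f$-basis $\{V_i\}_{i\in I}$ to conclude $\sum_{i\in I}\dim V_i = \dim\mathcal{H}$. Transitivity of equality then yields the desired identity $\sum_{j\in J}\dim W_j = \sum_{i\in I}\dim V_i$.

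The only point worth mentioning is how to read the equality when $\mathcal{H}$ is infinite-dimensional, in which case some $W_j$ or $V_i$ could themselves be infinite-dimensional and the sums should be interpreted as sums of cardinals. This causes no problem, since the proof of Theorem 2.4 manufactures an actual orthonormal basis of $\mathcal{H}$ by taking the union of orthonormal bases of the $W_j$, and the cardinality of an orthonormal basis of a Hilbert space is a well-defined invariant of $\mathcal{H}$.

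There is no substantive obstacle here; the corollary is simply the observation that $\sum_{j\in J}\dim W_j$ is an invariant of $\mathcal{H}$ rather than of the particular $f$-basis chosen. Consequently, I would write the proof in a single line: by Theorem 2.4, both $\sum_{j\in J}\dim W_j$ and $\sum_{i\in I}\dim V_i$ equal $\dim\mathcal{H}$, hence they are equal to each other.
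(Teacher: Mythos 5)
Your proposal is correct and is exactly the argument the paper intends: the corollary follows by applying Theorem 2.4 to each $f$-basis and noting that both sums equal $\dim\mathcal{H}$. The paper gives no separate proof, so your one-line deduction (together with the sensible remark about interpreting the sums as cardinals in the infinite-dimensional case) matches the intended reasoning.
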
 Let $\{W_j\}_{j\in J}$ be a $f$-basis for $\mathcal{H}$, then every $f\in\mathcal{H}$ has a unique
expansion of the form $f=\sum_{j\in J}g_j$. Hence it is clear that each $g_j\in W_j$ is a linear operator of $f$. If we denote this linear
operator by $P_{W_j}:\mathcal{H}\rightarrow W_j$, then $g_j=P_{W_j}f$, and we have $f=\sum_{j\in J}P_{W_j}f$. The sequence $\{P_{W_j}\}_{j\in J}$
is called the $f$-dual sequence of $\{W_j\}_{j\in J}$ and $\mathcal{W}=\{(W_j, P_{W_j})\}_{j\in J}$ is called $f$-basis system.\par
In the next theorem we show that the operators of a $f$-dual sequence are continuous projections.
\begin{theorem} Let $\mathcal{W}=\{W_j\}_{j\in J}$ be a $f$-basis for $\mathcal{H}$, with $f$-dual sequence $\{P_{W_j}\}_{j\in J}$. Then
$$P_{W_j}\in B(\mathcal{H}, W_j)\;\;\;\;\; \text{and}\;\;\;\;\;\;\; P_{W_i}P_{W_j}=\delta_{ij}P_{W_j} \;\;\;\;\;\forall i,j\in J,$$
where $\delta_{ij}$ is the Kronecker delta.\end{theorem}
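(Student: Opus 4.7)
The plan is to split the statement into two parts: the purely algebraic identity $P_{W_i}P_{W_j} = \delta_{ij}P_{W_j}$, which will follow immediately from uniqueness of the $f$-basis expansion, and the continuity claim $P_{W_j} \in B(\mathcal{H}, W_j)$, which is the substantive part and will require the classical Banach-space argument used for ordinary Schauder bases (auxiliary norm plus the open mapping theorem).

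For the algebraic part, I would fix $g_j \in W_j$ and view it as an element of $\mathcal{H}$. Since the trivial decomposition $g_j = 0 + \cdots + 0 + g_j + 0 + \cdots$ is a representation of $g_j$ with one component in each $W_k$, uniqueness of the $f$-basis expansion forces $P_{W_j}(g_j) = g_j$ and $P_{W_i}(g_j) = 0$ for $i \neq j$. Applying this with $g_j$ replaced by $P_{W_j}f$ for an arbitrary $f\in\mathcal{H}$ immediately yields $P_{W_j}P_{W_j}f = P_{W_j}f$ and $P_{W_i}P_{W_j}f = 0$ when $i\neq j$, which is the claimed Kronecker-delta relation.

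For boundedness, I would adapt the standard Schauder basis trick. After fixing an enumeration of $J$ (say $J=\mathbb{N}$), define an auxiliary norm on $\mathcal{H}$ by
\[
\vert\!\vert\!\vert f\vert\!\vert\!\vert = \sup_{n\in\mathbb{N}} \Bigl\| \sum_{j=1}^{n} P_{W_j}f \Bigr\|.
\]
Clearly $\|f\| \leq \vert\!\vert\!\vert f\vert\!\vert\!\vert$. The main task is to verify that $(\mathcal{H},\vert\!\vert\!\vert\cdot\vert\!\vert\!\vert)$ is a Banach space: given a $\vert\!\vert\!\vert\cdot\vert\!\vert\!\vert$-Cauchy sequence $(f_k)$, one shows that for each fixed $n$ the partial sums $S_n f_k := \sum_{j=1}^n P_{W_j}f_k$ converge in $\|\cdot\|$ to some $h_n \in \mathcal{H}$, that the differences $h_n - h_{n-1}$ lie in $W_n$, and that the series $\sum (h_n-h_{n-1})$ converges in $\|\cdot\|$ to a limit $f$ whose unique $f$-basis expansion has $P_{W_j}f = h_j - h_{j-1}$. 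Then the open mapping theorem (applied to the identity $\iota:(\mathcal{H},\vert\!\vert\!\vert\cdot\vert\!\vert\!\vert)\to(\mathcal{H},\|\cdot\|)$, which is a bounded bijection) gives a constant $M$ with $\vert\!\vert\!\vert f\vert\!\vert\!\vert \leq M\|f\|$, and therefore
\[
\|P_{W_j}f\| = \|S_j f - S_{j-1}f\| \leq 2\vert\!\vert\!\vert f\vert\!\vert\!\vert \leq 2M\|f\|,
\]
proving that each $P_{W_j}$ is bounded.

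The main obstacle is the completeness step: identifying the limit $f$ and checking that its unique decomposition really reproduces the partial-sum limits $h_n$ is the delicate point, since one must invoke uniqueness of the expansion in $\mathcal{H}$ to conclude $P_{W_j}f = h_j - h_{j-1}$. A minor subtlety is the dependence on the chosen enumeration of $J$; this is harmless because the boundedness of an individual $P_{W_j}$ only needs one enumeration in which $j$ appears, but I would note it explicitly to keep the argument clean.
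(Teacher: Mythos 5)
Your proposal is correct and follows essentially the same route as the paper: the paper likewise obtains $P_{W_i}P_{W_j}=\delta_{ij}P_{W_j}$ from uniqueness of the expansion, and proves boundedness via an auxiliary sup-of-partial-sums norm, a completeness argument for the resulting space, and the open mapping theorem. The only cosmetic difference is that the paper renorms the isomorphic sequence space $\mathcal{A}=\big\{\{g_j\}_{j\in J}:\ g_j\in W_j,\ \sum_{j\in J}g_j\ \text{convergent}\big\}$ and inverts the synthesis map $T(\{g_j\})=\sum_{j\in J}g_j$ rather than renorming $\mathcal{H}$ itself, and it takes the supremum over all finite subsets of $J$ rather than over initial segments of a fixed enumeration.
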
\begin{proof} Define the space $$\mathcal{A}=\Big\{\{g_j\}_{j\in J}|\;g_j\in W_j,
\;\;\sum_{j\in J}g_j \;\;\;\text{is convergent}\Big\},$$ with the norm defined by $$\big\|\{g_j\}_{j\in J}\big\|=\sup_{0<|F|<\infty\atop{F\subseteq J}}
\Big\|\sum_{i\in F}g_i\Big\|<\infty.$$ It is clear that $\mathcal{A}$ endowed with this norm, is a normed space with respect to
the pointwise operations. We show that the space $\mathcal{A}$ is a complete. Let $\{u_n\}_{n\in\mathbb{N}}$ be a Cauchy sequence
in $\mathcal{A}$. If $u_n=\{g_{nj}\}_{j\in J}$, then given any $\varepsilon>0$, there exists a number $N$ such that
\begin{equation}\sup_{0<|F|<\infty\atop{F\subseteq J}}\Big\|\sum_{i\in F}(g_{ni}-g_{mi})\Big\|<\varepsilon\end{equation} for all $m,n\geq N$.
This yields\begin{align*} \|g_{nj}-g_{mj}\|\leq\sup_{0<|F|<\infty\atop{F\subseteq J}}\Big\|\sum_{i\in F}(g_{ni}-g_{mi})\Big\|<\varepsilon,
\end{align*} for all $j\in J$ and $m,n\geq N$. It follows that $\{g_{nj}\}_{n\in\mathbb{N}}$ is a Cauchy sequence in $W_j$ and thus convergent.
Let $g_j\in W_j$ such that $g_j=\lim_{n\to\infty}g_{nj}$ and $u=\{g_j\}_{j\in J}$. From (5), by letting $m\to\infty$, we obtain \begin{equation}
\sup_{0<|F|<\infty\atop{F\subseteq J}}\Big\|\sum_{i\in F}(g_{ni}-g_i)\Big\|\leq\varepsilon\end{equation} for all $n\geq N$.
Moreover, for every finite subset $F\subset J$ we have\begin{align*}\Big\|\sum_{i\in F}g_i\Big\|&\leq\Big\|\sum_{i\in F}
(g_{Ni}-g_i)\Big\|+\Big\|\sum_{i\in F}g_{Ni}\Big\|\\&\leq\sup_{0<|F|<\infty\atop{F\subseteq J}}\Big\|\sum_{i\in F}(g_{Ni}-g_{i})\Big\|
+\sup_{0<|F|<\infty\atop{F\subseteq J}}\Big\|\sum_{i\in F}g_{Ni}\Big\|\end{align*} which implies that $u\in\mathcal{A}$. Further (6) implies that
the sequence $\{u_n\}_{n\in\mathbb{N}}$ is convergent to $u$ in $\mathcal{A}$. This proves that $\mathcal{A}$ is a Banach space.
Now define the mapping $$T:\mathcal{A}\rightarrow\mathcal{H}\;\;\;\;\text{with}\;\;\;\;T(\{g_j\}_{j\in J})=\sum_{j\in J}g_j.$$
Since $\mathcal{W}$ is a $f$-basis for $\mathcal{H}$ hence $T$ is linear, one-to-one and onto. On the other hand, since $$\|T(\{g_j\}_{j\in J})\|
=\Big\|\sum_{j\in J}g_j\Big\|\leq\sup_{0<|F|<\infty\atop{F\subseteq J}}\Big\|\sum_{i\in F}g_i\Big\|=\|\{g_j\}_{j\in J}\|.$$ Thus $T$ is continuous
and by open mapping theorem $T^{-1}$ is also continuous. This shows that $\mathcal{A}$ and $\mathcal{H}$ are Banach spaces isomorphic.
Now suppose that $f=\sum_{j\in J}g_j$ is a fixed, arbitrary element of $\mathcal{H}$ and let $j\in J$ be arbitrary. Then we have
\begin{align*}\|P_{W_j}f\|&=\|g_j\|\leq\sup_{0<|F|<\infty\atop{F\subseteq J}}\big\|\sum_{i\in F}g_i\big\|=\|T^{-1}f\|\leq\|T^{-1}\|\|f\|.
\end{align*} This shows that each $P_{W_j}$ is continuous and $\|P_{W_j}\|\leq\|T^{-1}\|$. Finally from $P_{W_i}g_j=\delta_{ij}g_j$ we have $P_{W_i}P_{W_j}
=\delta_{ij}P_{W_j}$ for all $i,j\in J$.\end{proof} Let $\{W_j\}_{j\in J}$ be a $f$-basis for $\mathcal{H}$ and let $\{P_{W_j}\}_{j\in J}$ be the $f$-dual
sequence of $\{W_j\}_{j\in J}$. Then $F$-partial sum operator of $\{W_j\}_{j\in J}$ defined by
$$S_{F}:\mathcal{H}\rightarrow\mathcal{H}\;\;\;\;\text{with}\;\;\;\;S_{F}f=\sum_{j\in F}P_{W_j}f,$$ for all finite subset $F\subset J$.
By Theorem 2.6, $S_F$ is a bounded operator and \begin{equation} 1\leq\sup_{0<|F|<\infty\atop{F\subseteq J}}\|S_F\|<\infty.\end{equation}
\begin{definition} Let $\{W_j\}_{j\in J}$ be a sequence of closed subspaces of $\mathcal{H}$. Then\begin{itemize}\item [$(i)$] $\{W_j\}_{j\in J}$
is called a complete set for $\mathcal{H}$, if $\mathcal{H}=\overline{\Span}\{W_j\}_{j\in J}$.\item [$(ii)$] A family of operators
$\{Q_j\in B(\mathcal{H}, W_j):\;\; j\in J\}$ is called a $f$-biorthogonal sequence of $\{W_j\}_{j\in J}$, if $Q_ig_j=\delta_{ij}g_j$ for all $i,j\in J,
\;g_j\in W_j$.\end{itemize} As a direct consequence of definition, $\{W_j\}_{j\in J}$ is a complete set for $\mathcal{H}$, if and only if
$$\{f:\;\;\pi_{W_j}f=0, j\in J\}=\{0\}.$$ Moreover it follows from the definition that if $\{Q_j\}_{j\in J}$ is a $f$-biorthogonal sequence of
$\{W_j\}_{j\in J}$ then every $Q_j$ is a projection from $\mathcal{H}$ onto $W_j$ and $Q_i\pi_{W_j}=\pi_{W_j}Q_i^*$.\end{definition}
The following theorem establishes a simple criterion for determining when a complete set is a $f$-basis.
\begin{theorem} A complete sequence of closed subspaces $\{W_j\}_{j\in J}$ of $\mathcal{H}$ is a $f$-basis for $\mathcal{H}$ if and only if
there exists a constant $M$ such that \begin{equation}\Big\|\sum_{j\in F}g_j\Big\|\leq M\Big\|\sum_{j\in G}g_j\Big\|\end{equation}
for all finite subsets $F\subset G\subset J$ and arbitrary vectors $g_j\in W_j$.\end{theorem}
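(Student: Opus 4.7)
The proof strategy mirrors the classical Banach criterion for Schauder bases, transferred to the fusion setting.

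For the forward direction, I would invoke Theorem 2.6 together with the uniform bound $M := \sup_{F}\|S_{F}\| < \infty$ from (7). Given finite $F \subset G \subset J$ and arbitrary $g_j \in W_j$, setting $f := \sum_{j \in G} g_j$ and using the uniqueness of the fusion expansion of $f$ gives $P_{W_j} f = g_j$ for $j \in G$ and $P_{W_j} f = 0$ otherwise; hence $\sum_{j \in F} g_j = S_F f$ and the desired estimate follows from $\|S_F f\| \leq M\|f\|$.

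The substantive direction is the converse. Fix an enumeration of $J$ and let $V := \operatorname{span}\{W_j : j \in J\}$ be the algebraic span of the subspaces. First, applying (8) with $F = \{k\}$ to a finite combination $\sum_{j \in G} g_j = 0$ forces $g_k = 0$ for every $k \in G$, so each $v \in V$ has uniquely determined coordinates $g_j \in W_j$. This allows me to define a coordinate map $P_k : V \to W_k$ by $P_k v := g_k$ together with partial-sum operators $S_n := P_1 + \cdots + P_n$ on $V$; choosing $F = \{k\}$ in (8) gives $\|P_k v\| \leq M \|v\|$, and choosing $F = G \cap \{1, \ldots, n\}$ gives $\|S_n v\| \leq M \|v\|$. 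Completeness of $\{W_j\}_{j \in J}$ implies $\overline{V} = \mathcal{H}$, so these operators extend continuously to members of $B(\mathcal{H}, W_k)$ and $B(\mathcal{H})$ respectively, preserving the norm bound $M$.

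For $v \in V$ one has $S_n v = v$ eventually, and a standard $\varepsilon/3$ argument based on $\sup_n \|S_n\| \leq M$ and the density of $V$ upgrades this to $S_n f \to f$ for every $f \in \mathcal{H}$. This yields the convergent representation $f = \sum_k P_k f$ with $P_k f \in W_k$. Uniqueness of this expansion follows from the continuity of each $P_j$: any other representation $f = \sum_k h_k$ with $h_k \in W_k$ satisfies $h_j = \lim_n P_j \sum_{k \leq n} h_k = P_j f$. The main technical obstacle is verifying that the coordinate and partial-sum operators defined on the algebraic span $V$ are well-defined and uniformly bounded directly from (8); once this is established, the rest is routine density and extension machinery.
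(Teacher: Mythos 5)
Your forward direction coincides with the paper's: both set $M=\sup_F\|S_F\|$ and observe that $\sum_{j\in F}g_j=S_F\bigl(\sum_{j\in G}g_j\bigr)$. The converse, however, is argued by a genuinely different route. The paper works pointwise: given $f\in\mathcal{H}$, it uses completeness to produce approximants $\sum_{j\in F_n}g_{nj}\to f$ with $F_n\subset F_{n+1}$, applies inequality (8) to show each coordinate sequence $\{g_{nj}\}_{n}$ is Cauchy in $W_j$, and passes to the limit $g_j$ to obtain $f=\sum_{j\in J}g_j$ (uniqueness then follows from (8) with singleton $F$). You instead follow the classical Banach-space basis criterion: define the coordinate maps $P_k$ and partial-sum operators $S_n$ on the algebraic span $V$, verify directly from (8) that they are well defined and uniformly bounded by $M$, extend by continuity to $\mathcal{H}$ using $\overline{V}=\mathcal{H}$, and upgrade $S_n v=v$ on $V$ to $S_nf\to f$ everywhere by the standard $\varepsilon/3$ density argument; uniqueness then comes from continuity of the extended $P_j$. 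Both arguments are sound in outline, but yours is arguably tighter at the one delicate point: the paper's final step, asserting that $\lim_n\sum_{j\in F_n}g_{nj}=\sum_{j\in J}g_j$ once the coordinatewise limits exist, is stated without justification and really needs an estimate on $\bigl\|\sum_{j\in F}g_j-\sum_{j\in F_n}g_{nj}\bigr\|$ derived from (8) applied to sets rather than singletons; your extension-by-density machinery sidesteps this entirely. The one convention you add is fixing an enumeration of $J$ (so convergence is with respect to that ordering), which is consistent with the paper's standing assumption that $J$ is countable and with its Schauder-type notion of convergence.
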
\begin{proof} First suppose that $\{W_j\}_{j\in J}$
is a $f$-basis for $\mathcal{H}$ and let $M=\sup_{0<|F|<\infty\atop{F\subseteq J}}\|S_F\|$, then for all finite subsets $F\subset
G\subset J$ and arbitrary vectors $g_j\in W_j$ we have \begin{align*}\big\|\sum_{j\in F}g_j\big\|=\big\|S_{F}\big(\sum_{j\in G}
g_j\big)\big\|\leq M\big\|\sum_{j\in G}g_j\big\|.\end{align*} To prove the opposite implication, let $f\in\mathcal{H}$. By hypothesis, there exist
finite subsets $F_n\subset F_{n+1}\subset J$ and vectors $g_{nj}\in W_j$ for all $n\in\mathbb{N}$ such that $f=\lim_{n\to\infty}\sum_{j\in F_n}g_{nj}$.
For notational convenience, put $g_{nj}=0$ for $j\not\in F_n$, then for every $m>n$ and $j\in F_n$ we have\begin{align*}\|g_{nj}-g_{mj}\|&\leq M
\big\|\sum_{i\in F_n}(g_{ni}-g_{mi})\big\|\\&\leq M^2\big\|\sum_{i\in F_m}(g_{ni}-g_{mi})\big\|\\&=M^2\big\|\sum_{i\in F_n}g_{ni}-\sum_{i\in F_m}g_{mi}
\big\|\rightarrow 0\;\;\;\;(n\rightarrow\infty).\end{align*} This shows that $\{g_{nj}\}_{n\in\mathbb{N}}$ is a Cauchy sequence in $W_j$ and
thus convergent. Let $g_j=\lim_{n\to\infty}g_{nj}$ for some $g_j\in W_j$, then we have $$f=\lim_{n\to\infty}\sum_{j\in F_n}g_{nj}=\sum_{j\in J}g_j.$$
Now we show that this representation is unique. If $\sum_{j\in J}g_j=0$, then for every finite subset $F\subset J$ and $j\in F$ again
from hypothesis, we have $$\|g_j\|\leq M\big\|\sum_{i\in F}g_i\big\|\rightarrow0.$$ This shows that $g_j=0$.
Therefore $\{W_j\}_{j\in J}$ is a $f$-basis for $\mathcal{H}$.\end{proof} Suppose that $\{(W_j, P_{W_j})\}_{j\in J}$ is a $f$-basis system for
$\mathcal{H}$. Then $\{P_{W_j}^*(W_j)\}_{j\in J}$ is a family of closed subspaces of $\mathcal{H}$. The following theorem shows
that $\{(P_{W_j}^*(W_j), P_{W_j}^*)\}_{j\in J}$ is also a $f$-basis system for $\mathcal{H}$.
\begin{theorem} If $\{(W_j, P_{W_j})\}_{j\in J}$ is a $f$-basis system for
$\mathcal{H}$, then $\{(P_{W_j}^*(W_j), P_{W_j}^*)\}_{j\in J}$ is also a $f$-basis system for $\mathcal{H}$.\end{theorem}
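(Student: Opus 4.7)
The plan is to verify directly that $\{V_j\}_{j\in J}$ with $V_j:=P_{W_j}^*(W_j)$ is a fusion Schauder basis and that its $f$-dual sequence coincides with $\{P_{W_j}^*\}_{j\in J}$. The workhorse will be the adjoint of the biorthogonality relation from Theorem 2.6: taking adjoints in $P_{W_i}P_{W_j}=\delta_{ij}P_{W_j}$ yields
$$P_{W_i}^*P_{W_j}^*=\delta_{ij}P_{W_j}^* \quad\text{for all } i,j\in J,$$
so each $P_{W_j}^*$ is a bounded idempotent and the family $\{P_{W_j}^*\}_{j\in J}$ is mutually biorthogonal in the same sense.

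First I would check that each $V_j$ is a closed subspace. Since $P_{W_j}$ is idempotent with $\mathcal{R}(P_{W_j})=W_j$, we have $\mathcal{N}(P_{W_j}^*)=\mathcal{R}(P_{W_j})^\perp=W_j^\perp$; decomposing any $h\in\mathcal{H}$ as $h=\pi_{W_j}h+(I-\pi_{W_j})h$ then gives $P_{W_j}^*h=P_{W_j}^*(\pi_{W_j}h)$, so $V_j=P_{W_j}^*(W_j)=\mathcal{R}(P_{W_j}^*)$, which is closed as the range of a bounded idempotent. Moreover, because $W_j\cap W_j^\perp=\{0\}$, the restriction $P_{W_j}^*|_{W_j}\colon W_j\to V_j$ is a bijection, so every $v_j\in V_j$ is of the form $v_j=P_{W_j}^*g_j$ for a unique $g_j\in W_j$. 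Completeness of $\{V_j\}_{j\in J}$ follows immediately: if $f\perp V_j$ for all $j$, then $0=\langle f,P_{W_j}^*g\rangle=\langle P_{W_j}f,g\rangle$ for every $g\in W_j$; taking $g=P_{W_j}f\in W_j$ yields $P_{W_j}f=0$ for all $j$, whence $f=\sum_jP_{W_j}f=0$.

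Next I would invoke Theorem 2.8's criterion. Let $M:=\sup_{0<|F|<\infty,\,F\subseteq J}\|S_F\|<\infty$ (finite by (7)), and set $S_F^*=\sum_{j\in F}P_{W_j}^*$, which satisfies $\|S_F^*\|=\|S_F\|\leq M$. For finite $F\subseteq G\subseteq J$ and vectors $v_j=P_{W_j}^*g_j\in V_j$ with $g_j\in W_j$, the biorthogonality $P_{W_i}^*P_{W_j}^*=\delta_{ij}P_{W_j}^*$ gives
$$S_F^*\Bigl(\sum_{j\in G}v_j\Bigr)=\sum_{i\in F}\sum_{j\in G}P_{W_i}^*P_{W_j}^*g_j=\sum_{i\in F}P_{W_i}^*g_i=\sum_{j\in F}v_j,$$
and therefore $\bigl\|\sum_{j\in F}v_j\bigr\|\leq M\bigl\|\sum_{j\in G}v_j\bigr\|$. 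Combined with completeness, Theorem 2.8 certifies that $\{V_j\}_{j\in J}$ is an $f$-basis for $\mathcal{H}$.

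Finally I would identify the $f$-dual sequence. Given $f\in\mathcal{H}$, let $f=\sum_{j\in J}v_j$ be the unique $f$-basis expansion, and write $v_j=P_{W_j}^*g_j$ with $g_j\in W_j$. Since each $P_{W_i}^*$ is continuous, applying $P_{W_i}^*$ termwise and using $P_{W_i}^*P_{W_j}^*=\delta_{ij}P_{W_j}^*$ gives
$$P_{W_i}^*f=\sum_{j\in J}P_{W_i}^*P_{W_j}^*g_j=P_{W_i}^*g_i=v_i,$$
so the $f$-dual operator for $V_i$ coincides with $P_{W_i}^*$, proving $\{(V_j,P_{W_j}^*)\}_{j\in J}$ is an $f$-basis system. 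I do not anticipate a serious obstacle; the only subtlety worth pointing out is the initial closedness argument for $V_j$, which relies on observing that $P_{W_j}^*$ kills $W_j^\perp$, allowing $V_j$ to be identified with the full range of an idempotent rather than merely the image of a subspace.
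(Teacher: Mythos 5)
Your proposal is correct, and on the existence half of the argument it takes a genuinely different (and more careful) route than the paper. The paper establishes completeness of $\{P_{W_j}^*(W_j)\}_{j\in J}$ exactly as you do — pairing $f$ against $P_{W_j}^*P_{W_j}f$ to force $P_{W_j}f=0$ — and then asserts that completeness already yields a representation $f=\sum_j P_{W_j}^*g_j$, deducing uniqueness from the adjoint biorthogonality $P_{W_i}^*P_{W_j}^*=\delta_{ij}P_{W_j}^*$, which is also your uniqueness step. The jump from completeness to existence of a series expansion is a gap as written (a complete family of subspaces need not be an $f$-basis; that is precisely the content of Theorem 2.8), and your argument closes it: you bound $\bigl\|\sum_{j\in F}v_j\bigr\|\leq M\bigl\|\sum_{j\in G}v_j\bigr\|$ by applying the adjoint partial-sum operator $S_F^*$ and invoking Theorem 2.8's criterion together with the uniform bound (7). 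You also supply two details the paper leaves implicit: that $V_j=P_{W_j}^*(W_j)$ equals the full range of the bounded idempotent $P_{W_j}^*$ (hence is closed), via $\mathcal{N}(P_{W_j}^*)=W_j^\perp$, and that $P_{W_j}^*|_{W_j}$ is a bijection onto $V_j$. The paper's version is shorter; yours is self-contained and actually proves existence. One cosmetic remark: for uniqueness it suffices (and is what both arguments deliver) that the summands $v_j=P_{W_j}^*g_j$ are determined by $f$, not the preimages $g_j$ themselves, and your final termwise computation $P_{W_i}^*f=v_i$ states this correctly.
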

\begin{proof} First we prove that $\mathcal{H}=\overline{\Span}\{P_{W_j}^*(W_j)\}_{j\in J}$. To see this,
let $f\perp\overline{\Span}\{P_{W_j}^*(W_j)\}_{j\in J}$. Then $$\|P_{W_j}f\|^2=<f, P_{W_j}^*P_{W_j}f>=0,$$ which implies that $P_{W_j}f=0$ for all $j\in J$.
We also have $f=\sum_{j\in J}P_{W_j}f=0$, hence $\mathcal{H}=\overline{\Span}\{P_{W_j}^*(W_j)\}_{j\in J}$. This shows that every $f\in\mathcal{H}$ has at
least one representation of the form $f=\sum_{j\in J}P_{W_j}^*g_j$ for some sequence $\{g_j:\;\;g_j\in W_j\}_{j\in J}$. We show that this representation
is unique. Assume that $\sum_{j\in J}P_{W_j}^*g_j=0$. Then we have $$P_{W_i}^*g_i=P_{W_i}^*\big(\sum_{j\in J}P_{W_j}^*g_j\big)=0,$$
for all $i\in J$. Therefore $\{P_{W_j}^*(W_j)\}_{j\in J}$ is a $f$-basis for $\mathcal{H}$. Since $P_{W_i}^*P_{W_j}^*=
\delta_{ij}P_{W_j}^*$ for all $i,j\in J$ hence $\{P_j^*\}_{j\in J}$ is the $f$-dual sequence of $\{P_{W_j}^*(W_j)\}_{j\in J}$.
\end{proof}\begin{proposition} Every $f$-basis for a Hilbert space possesses a unique $f$-biorthogonal sequence.
\end{proposition}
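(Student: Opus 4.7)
The plan is to prove existence and uniqueness separately, using Theorem 2.6 as the main tool for existence and the boundedness built into the definition of an $f$-biorthogonal sequence for uniqueness.

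\textbf{Existence.} I would show that the $f$-dual sequence $\{P_{W_j}\}_{j\in J}$ itself satisfies the $f$-biorthogonality condition. By Theorem 2.6 each $P_{W_j}$ is already an element of $B(\mathcal{H}, W_j)$, so the only thing left is the identity $P_{W_i}g_j = \delta_{ij}g_j$ for every $g_j \in W_j$. The key observation is that if $g_j \in W_j$, then its unique $f$-basis expansion is the trivial one with $g_j$ in the $j$-th slot and zero elsewhere; this reads off directly from the definition of $P_{W_j}$ as the coefficient-extracting map. Hence $P_{W_i}g_j = \delta_{ij}g_j$, and $\{P_{W_j}\}$ is an $f$-biorthogonal sequence. (Alternatively, one can simply cite $P_{W_j}g_j = g_j$ together with $P_{W_i}P_{W_j}=\delta_{ij}P_{W_j}$ from Theorem 2.6.)

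\textbf{Uniqueness.} Suppose $\{Q_j\}_{j\in J}$ is any $f$-biorthogonal sequence of $\{W_j\}_{j\in J}$. Given $f\in\mathcal{H}$, use the $f$-basis to write $f=\sum_{j\in J}g_j$ with $g_j\in W_j$. Fix $i\in J$. Since $Q_i\in B(\mathcal{H},W_i)$ is continuous, I can pass it inside the norm-convergent series:
\begin{equation*}
Q_i f \;=\; Q_i\Big(\sum_{j\in J}g_j\Big) \;=\; \sum_{j\in J}Q_i g_j \;=\; \sum_{j\in J}\delta_{ij}g_j \;=\; g_i \;=\; P_{W_i}f.
\end{equation*}
Since $f\in\mathcal{H}$ was arbitrary, $Q_i=P_{W_i}$ for every $i\in J$, which gives uniqueness.

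\textbf{Main obstacle.} There is no serious technical difficulty—everything follows from Theorem 2.6 plus the definition. The one subtle point I want to make sure to justify is the interchange of $Q_i$ with the infinite sum, and this is precisely where the boundedness hypothesis $Q_j\in B(\mathcal{H},W_j)$ built into the definition of $f$-biorthogonal sequence is used. Without that hypothesis the uniqueness part would fail.
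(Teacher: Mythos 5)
Your proof is correct and follows essentially the same route as the paper's: existence via the $f$-dual sequence $\{P_{W_j}\}_{j\in J}$ (whose boundedness and biorthogonality come from Theorem 2.6), and uniqueness by applying $Q_i$ termwise to the expansion $f=\sum_{j\in J}P_{W_j}f$ and using continuity to pass $Q_i$ inside the series. You make the continuity step and the existence verification slightly more explicit than the paper does, but the argument is the same.
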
\begin{proof} Let $\{W_j\}_{j\in J}$ be a $f$-basis for $\mathcal{H}$ with $f$-dual sequence $\{P_{W_j}\}_{j\in J}$.
By definition, $\{P_{W_j}\}_{j\in J}$ is a $f$-biorthogonal sequence of $\{W_j\}_{j\in J}$. Moreover, if $\{Q_j\}_{j\in J}$ is another $f$-biorthogonal
sequence of $\{W_j\}_{j\in J}$, then for any $f\in\mathcal{H}$ and $i\in J$ we have\begin{align*} Q_if=\sum_{j\in J} Q_iP_{W_j}f=\sum_{j\in J}
\delta_{ij}P_{W_j}f=P_{W_i}f.\end{align*} Hence $Q_i=P_{W_i}.$\end{proof}\begin{definition} Let $\{W_j\}_{j\in J}$ be a $f$-basis for $\mathcal{H}$. We say that
$\{W_j\}_{j\in J}$ is a Bessel $f$-basis if whenever $\sum_{j\in J}g_j$ converges, then $\{g_j\}_{j\in J}\in\big(\sum_{j\in J}\oplus W_j\big)_{\ell^2}$.
It is called a Hilbert $f$-basis, if the series $\sum_{j\in J}g_j$ is convergent for all $\{g_j\}_{j\in J}\in\big(\sum_{j\in J}\oplus W_j\big)_{\ell^2}$.
\end{definition}\begin{theorem} $\{W_j\}_{j\in J}$ is a Bessel $f$-basis for $\mathcal{H}$ if and only if there exists a constant $A>0$ such that
$$A\sum_{j\in F}\|g_j\|^2\leq\big\|\sum_{j\in F}g_j\big\|^2$$ for any finite subset $F\subset J$ and any arbitrary vectors $g_j\in W_j$.\end{theorem}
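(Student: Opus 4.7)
The plan is to prove both implications using tools already developed in the paper. I treat the ``Bessel $f$-basis'' hypothesis as asserting that $\{W_j\}$ is a $f$-basis (so the $f$-dual projections $P_{W_j}$ from Theorem 2.6 are available) for which every norm-convergent series $\sum g_j$ is in fact $\ell^2$-summable in the norms $\|g_j\|$.

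For the easier direction ($\Leftarrow$), I would fix a convergent sum $f=\sum_{j\in J}g_j=\lim_n\sum_{j\in F_n}g_j$ (through some exhausting sequence of finite sets $F_n$) and observe that the partial sums are bounded in norm. The assumed inequality then yields $A\sum_{j\in F_n}\|g_j\|^2 \leq \|\sum_{j\in F_n}g_j\|^2$, which is uniformly bounded; letting $n\to\infty$ gives $\sum_j\|g_j\|^2<\infty$, which is precisely the Bessel condition.

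The forward direction ($\Rightarrow$) is where the real work lies. I would define the operator $\Phi:\mathcal{H}\to\big(\sum_{j\in J}\oplus W_j\big)_{\ell^2}$ by $\Phi(f)=\{P_{W_j}f\}_{j\in J}$. The Bessel hypothesis, applied to the basis expansion $f=\sum_j P_{W_j}f$, ensures $\Phi$ is well-defined and linear. To obtain boundedness of $\Phi$ I would invoke the closed graph theorem: if $f_n\to f$ in $\mathcal{H}$ and $\Phi(f_n)\to\{h_j\}$ in the $\ell^2$-direct sum, then the $\ell^2$-convergence forces $P_{W_j}f_n\to h_j$ coordinatewise in $W_j$, while the continuity of each $P_{W_j}$ established in Theorem 2.6 forces $P_{W_j}f_n\to P_{W_j}f$. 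Hence $h_j=P_{W_j}f$ for all $j$, the graph is closed, and $\Phi$ is bounded. This supplies a constant $B$ with $\sum_j\|P_{W_j}f\|^2\leq B\|f\|^2$ for every $f\in\mathcal{H}$.

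Finally, to extract the claimed inequality, I would test this bound on $f=\sum_{j\in F}g_j$ for an arbitrary finite $F\subset J$ and arbitrary $g_j\in W_j$. The biorthogonality relation $P_{W_i}g_j=\delta_{ij}g_j$ established in Theorem 2.6 immediately reduces $P_{W_i}f$ to $g_i$ when $i\in F$ and to $0$ otherwise, so the inequality for $\Phi$ becomes $\sum_{j\in F}\|g_j\|^2\leq B\|\sum_{j\in F}g_j\|^2$, and taking $A=1/B$ gives the desired constant. The one point I would slow down to check is that $\Phi$ genuinely maps into the $\ell^2$-sum (rather than a formal coordinate sequence), since this is the only step that actually uses the Bessel assumption and is the lever that allows the closed graph theorem to be applied.
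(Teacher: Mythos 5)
Your proof is correct, but it travels a dual route to the paper's. The paper works with the \emph{synthesis} side: it introduces the space $\mathcal{A}$ of all sequences $\{g_j\}$ with $g_j\in W_j$ whose sum converges, uses the Bessel hypothesis to view $\mathcal{A}$ as a subspace of $\big(\sum_{j\in J}\oplus W_j\big)_{\ell^2}$, checks that $\mathcal{A}$ is closed there, shows the synthesis map $T(\{g_j\})=\sum_j g_j$ is bounded via Banach--Steinhaus applied to the finite partial-sum operators, and then invokes the bounded-inverse/open-mapping machinery (Rudin, Theorems 4.13 and 4.15) to conclude $T$ is bounded below, which is exactly the displayed inequality. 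You instead work with the \emph{analysis} operator $\Phi(f)=\{P_{W_j}f\}$, get its boundedness from the closed graph theorem using the continuity of each $P_{W_j}$ from Theorem 2.6, and then specialize to $f=\sum_{j\in F}g_j$ via the biorthogonality $P_{W_i}g_j=\delta_{ij}g_j$. Your version is shorter and avoids having to prove that $\mathcal{A}$ is closed, essentially because the Baire-category work has already been paid for in Theorem 2.6 (whose proof is itself an open-mapping argument on a space like $\mathcal{A}$); the paper's version is more self-contained on the synthesis side and does not need the dual projections beyond injectivity of $T$. One shared blemish: in the converse direction the stated inequality by itself does not make $\{W_j\}$ a $f$-basis (it gives uniqueness of expansions but not completeness), so both you and the paper are implicitly carrying the $f$-basis hypothesis through that direction --- you at least say so explicitly, which is the honest reading of Definition 2.11.
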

\begin{proof} The sufficiency is trivial. Assume that $\{W_j\}_{j\in J}$ is a Bessel
$f$-basis and consider the space $$\mathcal{A}=\Big\{\{g_j\}_{j\in J}|\;g_j\in W_j,\;\;\sum_{j\in J}g_j \;\;\;\text{is convergent}\Big\}.$$
By assumption $\mathcal{A}$ is a subspace of $\big(\sum_{j\in J}\oplus W_j\big)_{\ell^2}$. We show that $\mathcal{A}$ is closed.
To see this, let $\{g_{nj}\}_{j\in J}$ be a sequence in $\mathcal{A}$ such that converges to some $\{g_j\}_{j\in J}\in\big(\sum_{j\in J}\oplus W_j\big)
_{\ell^2}$, then $g_{nj}\rightarrow g_j$ for all $j\in J$. Let $F\subset J$ be an arbitrary finite subset and let $n\in\mathbb{N}$, then we have
\begin{align*}\big\|\sum_{j\in F}g_j\big\|\leq\big\|\sum_{j\in F}(g_{nj}-g_j)\big\|+\big\|\sum_{j\in F}g_{nj}\big\|.\end{align*} It follows that
$\sum_{j\in J}g_j$ is Cauchy and hence convergent in $\mathcal{H}$, which implies that $\mathcal{A}$ is closed. Now define the operator $T:\mathcal{A}
\rightarrow\mathcal{H}$ by $$T(\{g_j\}_{j\in J})=\sum_{j\in J}g_j.$$ Then, it is obvious that $T$ is linear, one-to-one. To show that $T$ is a bounded,
consider the sequence of bounded linear operators $$T_{F}:\mathcal{A}\rightarrow\mathcal{H},\;\;\;\;\;\;T_{F}(\{g_j\}_{j\in J})=\sum_{j\in F}g_j$$
for all finite subset $F\subset J$. Clearly $T_{F}\rightarrow T$ pointwise, so $T$ is bounded by Banach-Steinhaus Theorem. Now by Theorems 4.13 and
4.15 of [11] there exists a constant $A>0$ such that $$A\sum_{j\in F}\|g_j\|^2\leq\big\|\sum_{j\in F}g_j\big\|^2$$ for any finite subset $F\subset J$
and any arbitrary vectors $g_j\in W_j$.\end{proof}
\begin{theorem} $\{W_j\}_{j\in J}$ is a Hilbert $f$-basis for $\mathcal{H}$, if and only if there exists a constant $B>0$ such that $$\big\|\sum_{j\in F}
g_j\big\|^2\leq B\sum_{j\in F}\|g_j\|^2$$ for any finite subset $F\subset J$ and any arbitrary vectors $g_j\in W_j$.
\end{theorem}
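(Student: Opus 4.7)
The plan is to mirror the structure of the proof of the preceding theorem (Theorem 2.11) and exploit the symmetry between the two conditions, using the natural synthesis map from $\bigl(\sum_{j\in J}\oplus W_j\bigr)_{\ell^2}$ to $\mathcal{H}$ together with the Banach--Steinhaus theorem.

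For the easy direction, assume the inequality $\bigl\|\sum_{j\in F}g_j\bigr\|^2\leq B\sum_{j\in F}\|g_j\|^2$ holds. Given any $\{g_j\}_{j\in J}\in\bigl(\sum_{j\in J}\oplus W_j\bigr)_{\ell^2}$, I would show that the net of finite partial sums is Cauchy in $\mathcal{H}$: for any two finite subsets $F\subset G\subset J$,
\[
\Bigl\|\sum_{j\in G}g_j-\sum_{j\in F}g_j\Bigr\|^2=\Bigl\|\sum_{j\in G\setminus F}g_j\Bigr\|^2\leq B\sum_{j\in G\setminus F}\|g_j\|^2,
\]
which tends to zero as the finite sets exhaust $J$, since $\sum_{j\in J}\|g_j\|^2<\infty$. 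Hence $\sum_{j\in J}g_j$ converges, and $\{W_j\}_{j\in J}$ is a Hilbert $f$-basis.

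For the necessity, assume that $\{W_j\}_{j\in J}$ is a Hilbert $f$-basis, so the map
\[
T:\Bigl(\sum_{j\in J}\oplus W_j\Bigr)_{\ell^2}\longrightarrow\mathcal{H},\qquad T(\{g_j\}_{j\in J})=\sum_{j\in J}g_j,
\]
is well-defined on the entire Hilbert space $\bigl(\sum_{j\in J}\oplus W_j\bigr)_{\ell^2}$ by hypothesis, and is clearly linear. The main point is to show that $T$ is bounded. For each finite $F\subset J$ define the truncation $T_F(\{g_j\}_{j\in J})=\sum_{j\in F}g_j$; each $T_F$ is linear and, by the Cauchy--Schwarz inequality,
\[
\|T_F(\{g_j\}_{j\in J})\|\leq\sum_{j\in F}\|g_j\|\leq\sqrt{|F|}\,\Bigl(\sum_{j\in J}\|g_j\|^2\Bigr)^{1/2},
\]
so each $T_F$ is bounded. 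Since $T_F\{g_j\}_{j\in J}\to T\{g_j\}_{j\in J}$ for every $\{g_j\}_{j\in J}$ as $F$ increases, the Banach--Steinhaus theorem gives $\sup_F\|T_F\|<\infty$, and by taking a limit this also yields $\|T\|<\infty$.

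Setting $B=\|T\|^2$, for any finite $F\subset J$ and arbitrary $g_j\in W_j$, I extend by zero outside $F$ to obtain an element of $\bigl(\sum_{j\in J}\oplus W_j\bigr)_{\ell^2}$, so that
\[
\Bigl\|\sum_{j\in F}g_j\Bigr\|^2=\|T(\{g_j\}_{j\in F})\|^2\leq B\sum_{j\in F}\|g_j\|^2,
\]
which is the desired inequality. The principal obstacle is really only the verification that the synthesis operator $T$ is genuinely defined on the whole $\ell^2$-direct sum (which is exactly the Hilbert $f$-basis hypothesis) and that Banach--Steinhaus applies; there is no need for an auxiliary completeness argument as in Theorem 2.11, since the domain $\bigl(\sum_{j\in J}\oplus W_j\bigr)_{\ell^2}$ is already complete.
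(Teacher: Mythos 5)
Your proposal is correct and follows essentially the same route as the paper: the necessity is obtained by applying the Banach--Steinhaus theorem to the finite truncations of the synthesis operator $T$ on $\bigl(\sum_{j\in J}\oplus W_j\bigr)_{\ell^2}$, and the sufficiency is the routine Cauchy-net argument that the paper dismisses as trivial. You merely supply the details (boundedness of each $T_F$, pointwise convergence, extension by zero) that the paper leaves implicit.
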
\begin{proof} Suppose that $\{W_j\}_{j\in J}$ is a Hilbert $f$-basis then the Banach-Steinhaus Theorem guarantees that the operator
$T:\big(\sum_{j\in J}\oplus W_j\big)_{\ell^2}\rightarrow\mathcal{H}$ defined by $T(\{g_j\}_{j\in J})=\sum_{j\in J}g_j$ is bounded.
Therefore there exists a constant $B>0$ such that $$\big\|\sum_{j\in F}g_j\big\|^2\leq B\sum_{j\in F}\|g_j\|^2$$ for any finite subset
$F\subset J$ and any arbitrary vectors $g_j\in W_j$. The opposite conclusion is trivial.\end{proof}
\begin{theorem} $\{(W_j, P_{W_j})\}_{j\in J}$ is a Bessel $f$-basis system if and only if $\{(P_j^*(W_j), P_{W_j}^*)\}_{j\in J}$
is a Hilbert $f$-basis system.\end{theorem}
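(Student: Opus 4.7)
The plan is to appeal to the finite-sum characterizations of the Bessel and Hilbert properties given by Theorems 2.12 and 2.13, combined with the biorthogonality $P_{W_i}P_{W_j}=\delta_{ij}P_{W_j}$ from Theorem 2.6 and the uniform partial-sum bound $M:=\sup_{F}\|S_F\|<\infty$ from (7). Two elementary norm comparisons for $k\in W_j$ will do the heavy lifting: $\|P_{W_j}^{*}k\|\le M\|k\|$ (from $\|P_{W_j}^{*}\|=\|P_{W_j}\|\le M$), and $\|k\|\le\|P_{W_j}^{*}k\|$ (since $\|k\|^2=\langle P_{W_j}k,k\rangle=\langle k,P_{W_j}^{*}k\rangle$, followed by Cauchy--Schwarz). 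Note also that by Theorem 2.9 the dual system $\{(P_{W_j}^{*}(W_j),P_{W_j}^{*})\}$ is genuinely an $f$-basis system, so Theorems 2.12 and 2.13 apply to both sides of the equivalence.

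For the forward direction, assume Bessel with constant $A$. Fix a finite $F\subset J$ and write $h_j=P_{W_j}^{*}k_j$ with $k_j\in W_j$. Then, by adjointness and Cauchy--Schwarz on sequences,
\[
\Big\|\sum_{j\in F}h_j\Big\| = \sup_{\|f\|=1}\Big|\sum_{j\in F}\langle k_j,P_{W_j}f\rangle\Big| \le \Big(\sum_{j\in F}\|k_j\|^2\Big)^{1/2}\sup_{\|f\|=1}\Big(\sum_{j\in F}\|P_{W_j}f\|^2\Big)^{1/2}.
\]
Applying the Bessel inequality (Theorem 2.12) to $\{P_{W_j}f\}_{j\in F}\subset W_j$ and then (7) bounds the second factor by $M/\sqrt{A}$, while $\|k_j\|\le\|h_j\|$ bounds the first by $(\sum\|h_j\|^2)^{1/2}$. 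Squaring produces the Hilbert inequality of Theorem 2.13 with constant $B=M^2/A$.

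For the converse, assume Hilbert with constant $B$ and fix $g_j\in W_j$ for $j\in F$. The key pairing is
\[
\Big\langle\sum_{k\in F}g_k,\ \sum_{j\in F}P_{W_j}^{*}g_j\Big\rangle = \sum_{j,k\in F}\langle P_{W_j}g_k,g_j\rangle = \sum_{j\in F}\|g_j\|^2,
\]
where the double sum collapses by the biorthogonality $P_{W_j}g_k=\delta_{jk}g_k$. Estimating via Cauchy--Schwarz, the Hilbert bound on $\{P_{W_j}^{*}g_j\}$, and $\|P_{W_j}^{*}g_j\|\le M\|g_j\|$,
\[
\sum_{j\in F}\|g_j\|^2 \le \Big\|\sum_{j\in F}g_j\Big\|\cdot M\sqrt{B}\,\Big(\sum_{j\in F}\|g_j\|^2\Big)^{1/2},
\]
and dividing through yields the Bessel inequality with $A=1/(M^2B)$. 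The only non-routine step is spotting this pairing; once one recognizes that coupling the primal synthesis $\sum g_j$ against the dual synthesis $\sum P_{W_j}^{*}g_j$ makes the biorthogonality collapse the double sum into $\sum\|g_j\|^2$, the rest is bookkeeping between the two constants and the operator bound $M$.
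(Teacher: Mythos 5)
Your proof is correct and follows essentially the same route as the paper: both directions reduce to the finite-sum characterizations of Theorems 2.12 and 2.13 via the pairing $\langle\sum_k g_k,\sum_j P_{W_j}^*g_j\rangle=\sum_j\|g_j\|^2$ (resp.\ pairing the dual synthesis sum against a test vector and using Cauchy--Schwarz plus the Bessel bound). Your version is in fact slightly more careful than the paper's about the two-sided comparison $\|g_j\|\le\|P_{W_j}^*g_j\|\le M\|g_j\|$ and about invoking Theorem 2.9 so that the characterizations apply to the dual system, but the underlying argument is the same.
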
\begin{proof} Fix $F\subset J$ with $|F|<\infty$. First suppose that $\{W_j\}_{j\in J}$
is a Bessel $f$-basis, then $\{P_{W_j}f\}_{j\in J}\in\big(\sum_{j\in J}\oplus W_j\big)_{\ell^2}$ for all $f\in\mathcal{H}$. If
$f=\sum_{j\in F}P_{W_j}^*g_j$ for some vectors $g_j\in W_j$. Then we have\begin{align*}\big\|\sum_{j\in F}P_{W_j}^*g_j\big\|^4&=
|<f, \sum_{j\in F}P_{W_j}^*g_j>|^2\leq\big(\sum_{j\in F}\|P_{W_j}f\|\|P_{W_j}^*g_j\|\big)^2\\&\leq\big(\sum_{j\in J}\|P_{W_j}f\|^2\big)
\big(\sum_{j\in F}\|P_{W_j}^*g_j\|^2\big).\end{align*}
This shows that $\{P_{W_j}^*(W_j)\}_{j\in J}$ is a Hilbert $g$-basis. For the other implication, assume that $\{P_{W_j}^*(W_j)\}_{j\in J}$
is a Hilbert $f$-basis. If $f=\sum_{j\in F}g_j$ for some vectors $g_j\in W_j$, then $g_j=P_{W_j}f$ for all $j\in F$.
By Theorem 2.6 and Theorem 2.13 there exists a constant $B>0$ such that \begin{align*} \big\|\sum_{j\in F}P_{W_j}^*P_{W_j}f\big\|^2&\leq
B\sum_{j\in F}\|P_{W_j}f\|^2=B<f, \sum_{j\in F}P_{W_j}^*P_{W_j}f>\\&\leq B\|f\|\big\|\sum_{j\in F}P_{W_j}^*P_{W_j}f\big\|.\end{align*}
Hence, $$\big\|\sum_{j\in F}P_{W_j}^*P_{W_j}f\big\|\leq B\big\|\sum_{j\in F}g_j\big\|.$$ We also have \begin{align*}
\sum_{j\in F}\|g_j\|^2&=\sum_{j\in F}\|P_{W_j}f\|^2=<f, \sum_{j\in F}P_{W_j}^*P_{W_j}f>\\&\leq\big\|\sum_{j\in F}g_j\big\|\big\|\sum_{j\in F}
P_{W_j}^*P_{W_j}f\big\|\leq B\big\|\sum_{j\in F}g_j\big\|^2.\end{align*} Now applying Theorem 2.12 the result follows at once.\end{proof}

\section{Orthonormal Fusion Bases and Riesz Fusion Bases}

In this section, we develop a theory of orthonormal fusion bases and Riesz fusion bases for the Hilbert spaces.
\begin{definition} Let $\mathcal{W}=\{W_j\}_{j\in J}$ be a sequence of closed subspaces of $\mathcal{H}$. Then
\begin{itemize}\item [$(i)$] $\mathcal{W}$ is called an orthonormal fusion system or simply a orthonormal $f$-system for $\mathcal{H}$, if
$\{\pi_{W_j}\}_{j\in J}$ is a $f$-biorthogonal sequence of $\mathcal{W}$. That is
$$\pi_{W_i}g_j=\delta_{ij}g_j\;\;\;\;\;\forall i,j\in J, g_j\in W_j.$$\item [$(ii)$] $\mathcal{W}$ is called an orthonormal $f$-basis
for $\mathcal{H}$, if it is a complete orthonormal $f$-system for $\mathcal{H}$.\end{itemize}\end{definition}\begin{example}
Let $\{e_i\}_{i\in\mathbb{N}}$ be an orthonormal basis for $\mathcal{H}$. Then\begin{itemize}\item [$(i)$] If for each $j\in\mathbb{N}$
define the subspace $W_j\subset\mathcal{H}$ by $$W_j=\Span\{e_{2j-1}+e_{2j}\}\;\;\;\;\text{and}\;\;\;\; \pi_{W_j}(f)=\frac{1}{2}<f, e_{2j-1}+e_{2j}>
(e_{2j-1}+e_{2j}).$$ Then it is easily checked that $\{W_j\}_{j\in\mathbb{N}}$ is an orthonormal $f$-system for $\mathcal{H}$. But it is not an orthonormal
$f$-basis for $\mathcal{H}$. \item [$(ii)$] If for each $j\in\mathbb{N}$ define the subspace $W_j\subset\mathcal{H}$ by $$W_j=\Span\{e_{2j-1}, e_{2j}\}
\;\;\;\;\text{and}\;\;\;\; \pi_{W_j}(f)=<f, e_{2j-1}>e_{2j-1}+<f, e_{2j}>e_{2j}.$$ Then $\{W_j\}_{j\in\mathbb{N}}$ is an orthonormal $f$-basis for
$\mathcal{H}$.\end{itemize}\end{example} The following theorem shows that every orthonormal $f$-basis is a Bessel and Hilbert $f$-basis for $\mathcal{H}$.
\begin{theorem} Let $\{W_j\}_{j\in J}$ be an orthonormal $f$-system for $\mathcal{H}$, then the series $\sum_{j\in J}g_j$ converges if and only if
$\{g_j\}_{j\in J}\in\big(\sum_{j\in J}\oplus W_j\big)_{\ell^2}$ and in this case $$\big\|\sum_{j\in J}g_j\big\|^2=\sum_{j\in J}\|g_j\|^2.$$
\end{theorem}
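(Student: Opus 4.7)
The plan is to reduce the statement to the standard Pythagorean/unconditional-convergence argument in a Hilbert space, after first observing that an orthonormal $f$-system forces the constituent subspaces to be mutually orthogonal.

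The first step, and really the only nontrivial one, is to extract genuine orthogonality from the defining condition $\pi_{W_i}g_j=\delta_{ij}g_j$. Fix $i\neq j$ and $g_j\in W_j$. Then $\pi_{W_i}g_j=0$, which means $g_j\perp W_i$, so for any $g_i\in W_i$ we have $\langle g_i,g_j\rangle=0$. In other words, $W_i\perp W_j$ whenever $i\neq j$. This is the crucial structural fact; everything after is essentially bookkeeping.

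Next I would establish the finite Pythagorean identity: for any finite $F\subset J$ and any choice $g_j\in W_j$,
\begin{equation*}
\Big\|\sum_{j\in F}g_j\Big\|^2=\sum_{i,j\in F}\langle g_i,g_j\rangle=\sum_{j\in F}\|g_j\|^2,
\end{equation*}
using the orthogonality from step one to kill the off-diagonal terms. With this identity in hand, the equivalence falls out cleanly. If $\{g_j\}_{j\in J}\in\big(\sum_{j\in J}\oplus W_j\big)_{\ell^2}$, then for any two finite sets $F_1,F_2\subset J$ the orthogonality gives
\begin{equation*}
\Big\|\sum_{j\in F_1}g_j-\sum_{j\in F_2}g_j\Big\|^2=\sum_{j\in F_1\triangle F_2}\|g_j\|^2,
\end{equation*}
which becomes arbitrarily small once $F_1,F_2$ both contain a large enough finite set, so the net of partial sums is Cauchy and $\sum_{j\in J}g_j$ converges. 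Conversely, if $\sum_{j\in J}g_j$ converges in norm, its partial sums are bounded, so the Pythagorean identity applied to any finite $F$ gives $\sum_{j\in F}\|g_j\|^2\leq\sup_F\|\sum_{j\in F}g_j\|^2<\infty$, whence $\{g_j\}_{j\in J}\in\big(\sum_{j\in J}\oplus W_j\big)_{\ell^2}$.

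Finally, for the norm equality, I would pass to the limit in the finite Pythagorean identity: since both $\sum_{j\in F}g_j\to\sum_{j\in J}g_j$ in $\mathcal{H}$ and $\sum_{j\in F}\|g_j\|^2\to\sum_{j\in J}\|g_j\|^2$ in $\mathbb{R}$ as $F$ increases along the directed set of finite subsets of $J$, continuity of the norm yields $\big\|\sum_{j\in J}g_j\big\|^2=\sum_{j\in J}\|g_j\|^2$. The only potential pitfall is interpreting the convergence of $\sum_{j\in J}g_j$ correctly over an arbitrary countable index set, but the orthogonality of the $W_j$'s makes every convergent rearrangement unconditional with the same sum, so the argument is independent of the chosen enumeration.
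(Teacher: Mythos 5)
Your proposal is correct and follows essentially the same route as the paper: both rest on deriving the finite Pythagorean identity $\bigl\|\sum_{j\in F}g_j\bigr\|^2=\sum_{j\in F}\|g_j\|^2$ from the condition $\pi_{W_i}g_j=\delta_{ij}g_j$ (the paper does this by writing $\langle g_j,g_i\rangle=\langle\pi_{W_i}g_j,g_i\rangle$ inside the double sum, which is the same orthogonality observation you isolate as a separate step). You simply spell out the Cauchy-criterion and limit arguments that the paper compresses into ``From this the result follows.''
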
\begin{proof} For every finite subset $F\subset J$ we have $$\big\|\sum_{j\in F}g_j\big\|^2=\sum_{j\in F}\sum_{i\in F}<\pi_{W_i}g_j, g_i>=
\sum_{j\in F}\sum_{i\in F}<\delta_{ij}g_j, g_i>=\sum_{j\in F}\|g_j\|^2.$$ From this the result follows.\end{proof}
\begin{theorem} $($Bessel's inequality$)$ If $\{W_j\}_{j\in J}$ is an orthonormal $f$-system for $\mathcal{H}$. Then \begin{align*}
\sum_{j\in J}\|\pi_{W_j}f\|^2\leq\|f\|^2\;\;\;\;\;\;\text{for all}\;\; f\in\mathcal{H}.\end{align*}\end{theorem}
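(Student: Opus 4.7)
The plan is to mimic the classical proof of Bessel's inequality for orthonormal sequences, using Theorem 3.3 in place of the usual ``sum of squared coefficients'' identity. The crucial point is that in an orthonormal $f$-system, the subspaces $W_j$ are pairwise orthogonal (since $\pi_{W_i} g_j = 0$ whenever $i \neq j$ and $g_j \in W_j$), so partial sums of elements drawn from distinct $W_j$'s behave Pythagorean-wise.

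Concretely, I would fix $f \in \mathcal{H}$ and an arbitrary finite subset $F \subset J$, and set $g_F := \sum_{j \in F} \pi_{W_j} f$. Since each $\pi_{W_j} f \in W_j$, Theorem 3.3 applied to the finite family $\{\pi_{W_j} f\}_{j \in F}$ yields
\begin{equation*}
\|g_F\|^2 = \sum_{j \in F} \|\pi_{W_j} f\|^2.
\end{equation*}
The next step is to show $f - g_F \perp g_F$. Using that each $\pi_{W_j}$ is a self-adjoint idempotent, I compute
\begin{equation*}
\langle f, g_F \rangle = \sum_{j \in F} \langle f, \pi_{W_j} f \rangle = \sum_{j \in F} \langle \pi_{W_j} f, \pi_{W_j} f \rangle = \sum_{j \in F} \|\pi_{W_j} f\|^2 = \|g_F\|^2,
\end{equation*}
so $\langle f - g_F, g_F \rangle = 0$.

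The Pythagorean identity then gives $\|f\|^2 = \|f - g_F\|^2 + \|g_F\|^2 \geq \|g_F\|^2 = \sum_{j \in F} \|\pi_{W_j} f\|^2$. Finally, taking the supremum over all finite $F \subset J$ yields $\sum_{j \in J} \|\pi_{W_j} f\|^2 \leq \|f\|^2$, as desired.

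There is no real obstacle here; the only point requiring mild care is justifying $\|g_F\|^2 = \sum_{j \in F}\|\pi_{W_j}f\|^2$, which is immediate from the preceding theorem once one notices that orthonormality of the $f$-system forces pairwise orthogonality of the subspaces. Everything else is a standard projection computation, and the passage from finite sums to the countable sum uses only the fact that a bounded-above net of non-negative partial sums converges to its supremum.
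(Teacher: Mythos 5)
Your proof is correct and follows essentially the same route as the paper: both fix a finite $F\subset J$, use Theorem 3.3 to get $\big\|\sum_{j\in F}\pi_{W_j}f\big\|^2=\sum_{j\in F}\|\pi_{W_j}f\|^2$, derive the identity $\big\|f-\sum_{j\in F}\pi_{W_j}f\big\|^2=\|f\|^2-\sum_{j\in F}\|\pi_{W_j}f\|^2$ (the paper by expanding the square for arbitrary $g_j\in W_j$ and specializing, you by the equivalent orthogonality-plus-Pythagoras argument), and then pass to the supremum over finite subsets. The only cosmetic difference is that the paper's more general expansion also yields the best-approximation property needed for its Corollary 3.5, which your streamlined version does not record.
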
\begin{proof} Let $f\in\mathcal{H}$.
Fix $F\subset J$ with $|F|<\infty$. Then By Theorem 3.3 we have \begin{align*}\Big\|f-\sum_{j\in F}g_j\Big\|^2&=
\|f\|^2-\sum_{j\in F}<\pi_{W_j}f, g_j>-\sum_{j\in F}<g_j, \pi_{W_j}f>
+\sum_{j\in F}\|g_j\|^2\\&= \|f\|^2-\sum_{j\in F}\|\pi_{W_j}f\|^2+\sum_{j\in F}\|\pi_{W_j}f-g_j\|^2\end{align*} for arbitrary vectors $g_j\in W_j$.
In particular, if $g_j=\pi_{W_j}f$, then $$\big\|f-\sum_{j\in F}\pi_{W_j}f\big\|^2=\|f\|^2-\sum_{j\in F}\|\pi_{W_j}f\|^2.$$
From this we have $\sum_{j\in F}\|\pi_{W_j}f\|^2\leq\|f\|^2$, which implies that $\sum_{j\in J}\|\pi_{W_j}f\|^2\leq\|f\|^2$.\end{proof}
\begin{corollary} If $\{W_j\}_{j\in J}$ is an orthonormal $f$-system for $\mathcal{H}$, then for all $f\in\mathcal{H}$ the series $\sum_{j\in J}\pi_{W_j}f$
convergent and $$\big\|f-\sum_{j\in J}\pi_{W_j}f\big\|^2\leq\big\|f-\sum_{j\in J}g_j\big\|^2$$ for all $\{g_j\}_{j\in J}\in\big(\sum_{j\in J}\oplus W_j\big)_{\ell^2}$.\end{corollary}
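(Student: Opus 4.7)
The plan is to combine Bessel's inequality (Theorem 3.4) with the Pythagorean identity from Theorem 3.3 (orthogonality of the subspaces in the $f$-system) and then pass a best-approximation identity to the limit.

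First, I would establish convergence of $\sum_{j\in J}\pi_{W_j}f$. By Bessel's inequality, $\sum_{j\in J}\|\pi_{W_j}f\|^2\le\|f\|^2<\infty$, so $\{\pi_{W_j}f\}_{j\in J}\in\bigl(\sum_{j\in J}\oplus W_j\bigr)_{\ell^2}$. Theorem 3.3 then guarantees that $\sum_{j\in J}\pi_{W_j}f$ converges in $\mathcal{H}$. Similarly, for any $\{g_j\}_{j\in J}\in\bigl(\sum_{j\in J}\oplus W_j\bigr)_{\ell^2}$, Theorem 3.3 tells us that $\sum_{j\in J}g_j$ converges.

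Next, I would derive the best-approximation identity for any finite $F\subset J$ and any $g_j\in W_j$. Expanding and using $\langle f,g_j\rangle=\langle\pi_{W_j}f,g_j\rangle$ together with Theorem 3.3 applied to $\{g_j\}_{j\in F}$, one obtains (exactly as in the proof of Theorem 3.4)
\begin{align*}
\Big\|f-\sum_{j\in F}g_j\Big\|^2
&=\|f\|^2-\sum_{j\in F}\|\pi_{W_j}f\|^2+\sum_{j\in F}\|\pi_{W_j}f-g_j\|^2.
\end{align*}
Specializing to $g_j=\pi_{W_j}f$ gives $\bigl\|f-\sum_{j\in F}\pi_{W_j}f\bigr\|^2=\|f\|^2-\sum_{j\in F}\|\pi_{W_j}f\|^2$, so subtracting yields the finite-sum inequality
\begin{align*}
\Big\|f-\sum_{j\in F}\pi_{W_j}f\Big\|^2\le\Big\|f-\sum_{j\in F}g_j\Big\|^2.
\end{align*}

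Finally, I would pass $F$ through an exhausting sequence of finite subsets of $J$. Both $\sum_{j\in F}\pi_{W_j}f\to\sum_{j\in J}\pi_{W_j}f$ and $\sum_{j\in F}g_j\to\sum_{j\in J}g_j$ in norm, by the convergence established in the first step, so continuity of the norm transfers the inequality to the infinite sums, giving the stated conclusion. The only mildly delicate point is ensuring that both series converge along the same directed family of finite subsets, which follows at once from the fact that both tail sums are controlled in $\ell^2$ norm via Theorem 3.3; no further estimates are required.
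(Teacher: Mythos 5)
Your proof is correct and follows exactly the route the paper intends: the paper states this corollary without proof as an immediate consequence of Theorems 3.3 and 3.4, and your argument supplies precisely that derivation, reusing the identity $\|f-\sum_{j\in F}g_j\|^2=\|f\|^2-\sum_{j\in F}\|\pi_{W_j}f\|^2+\sum_{j\in F}\|\pi_{W_j}f-g_j\|^2$ already displayed in the proof of Theorem 3.4 and the $\ell^2$-convergence criterion of Theorem 3.3 before passing to the limit. No gaps.
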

\begin{theorem} Let $\mathcal{W}=\{W_j\}_{j\in J}$ be an orthonormal $f$-system for $\mathcal{H}$. Then the following conditions are equivalent:
\begin{itemize}\item [$(i)$] $\mathcal{W}$ is an orthonormal $f$-basis for $\mathcal{H}$.\item [$(ii)$] $f=\sum_{j\in J}\pi_{W_j}f\;\;\;\;\forall f\in \mathcal{H}.$ \item [$(iii)$] $\|f\|^2=\sum_{j\in J}\|\pi_{W_j}f\|^2\;\;\;\;\forall f\in \mathcal{H}.$
\item [$(iv)$] If $\pi_{W_j}f=0$ for all $j\in J$, then $f=0$.\end{itemize}
\end{theorem}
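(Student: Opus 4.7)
The plan is to prove the four conditions are equivalent via the standard cycle $(i)\Rightarrow(ii)\Rightarrow(iii)\Rightarrow(iv)\Rightarrow(i)$, leaning on the machinery already established (Corollary 3.5 on convergence of $\sum_{j}\pi_{W_j}f$, Theorem 3.3 on the Pythagorean identity for orthonormal $f$-systems, and the completeness criterion from Definition 2.7).

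For $(i)\Rightarrow(ii)$, I would start from the fact that $\mathcal{W}$ is complete. By Corollary 3.5 the series $\sum_{j\in J}\pi_{W_j}f$ converges; set $h=f-\sum_{j\in J}\pi_{W_j}f$. I would apply $\pi_{W_i}$ to both sides and move it inside the sum by continuity. Since $\pi_{W_j}f\in W_j$ and the orthonormal $f$-system property gives $\pi_{W_i}g_j=\delta_{ij}g_j$ for $g_j\in W_j$, the telescoping yields $\pi_{W_i}h=\pi_{W_i}f-\pi_{W_i}f=0$ for every $i\in J$. Completeness (via the characterization $\{f:\pi_{W_j}f=0,\,j\in J\}=\{0\}$ from Definition 2.7) then forces $h=0$.

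For $(ii)\Rightarrow(iii)$: since $f=\sum_{j\in J}\pi_{W_j}f$ converges, Theorem 3.3 immediately gives $\|f\|^2=\sum_{j\in J}\|\pi_{W_j}f\|^2$. For $(iii)\Rightarrow(iv)$: if $\pi_{W_j}f=0$ for all $j$, then $\|f\|^2=\sum_j\|\pi_{W_j}f\|^2=0$, hence $f=0$. For $(iv)\Rightarrow(i)$: I need only verify completeness. If $f\perp\overline{\Span}\{W_j\}_{j\in J}$, then in particular $f\perp W_j$ for each $j$, so $\pi_{W_j}f=0$ for all $j$; hypothesis $(iv)$ gives $f=0$, and therefore $\mathcal{H}=\overline{\Span}\{W_j\}_{j\in J}$, i.e., $\mathcal{W}$ is a complete orthonormal $f$-system, which by Definition 3.1(ii) is an orthonormal $f$-basis.

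No step is really hard here; the only subtle point is the justification of interchanging $\pi_{W_i}$ with the infinite sum in $(i)\Rightarrow(ii)$, which requires only the continuity of $\pi_{W_i}$ and the convergence guaranteed by Corollary 3.5. All other implications are immediate from the results already in Section 3 plus the definition of completeness.
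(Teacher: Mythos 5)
Your proof is correct and follows essentially the same route as the paper: the cycle $(i)\Rightarrow(ii)\Rightarrow(iii)\Rightarrow(iv)\Rightarrow(i)$, with Corollary 3.5 driving $(i)\Rightarrow(ii)$, Theorem 3.3 giving $(ii)\Rightarrow(iii)$, and the identical orthogonality argument for $(iv)\Rightarrow(i)$. The only (harmless) variation is that for $(i)\Rightarrow(ii)$ you show $h=f-\sum_{j}\pi_{W_j}f$ is annihilated by every $\pi_{W_i}$ and invoke the completeness characterization, whereas the paper implicitly relies on the best-approximation inequality of Corollary 3.5; both are valid one-line finishes.
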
\begin{proof} The implication $(i)\Rightarrow (ii)$ follows immediately from Corollary 3.5. The implications $(ii)\Rightarrow (iii)
\Rightarrow (iv)$ are obvious. To prove $(iv)\Rightarrow (i)$ suppose that $f\perp\overline{\Span}\{W_j\}_{j\in J}$, then for every $j\in J$
we have $\|\pi_{W_j}f\|^2=<f, \pi_{W_j}f>=0$ which implies that $f=0$. Therefore $\mathcal{H}=\overline{\Span}\{W_j\}_{j\in J}.$\end{proof}
\begin{theorem} Let $\{(W_j, P_{W_j})\}_{j\in J}$ be a $f$-basis system for $\mathcal{H}$ and let $T:\mathcal{H}\rightarrow\mathcal{K}$ be a bounded
invertible operator such that $V_j=TW_j$ and $Q_{V_j}=TP_{W_j}T^{-1}$ for all $j\in J$. Then $\{(V_j, Q_{V_j})\}_{j\in J}$
is a $f$-basis system for $\mathcal{K}$.\end{theorem}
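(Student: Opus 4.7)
The plan is to transport the basis expansion along $T$ and use boundedness and invertibility of $T$ to preserve closedness, existence, and uniqueness.

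First I would verify that the $V_j$ are indeed closed subspaces of $\mathcal{K}$. Since $T$ is a bounded bijection with bounded inverse $T^{-1}$ (open mapping theorem), it maps closed subspaces homeomorphically onto closed subspaces, so $V_j = TW_j$ is closed. Moreover $Q_{V_j} = TP_{W_j}T^{-1}$ is bounded, has range contained in $TW_j = V_j$, and the relations $Q_{V_i}Q_{V_j} = TP_{W_i}P_{W_j}T^{-1} = \delta_{ij}TP_{W_j}T^{-1} = \delta_{ij}Q_{V_j}$ hold. In particular each $Q_{V_j}$ is a projection onto $V_j$.

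Next I would establish existence of the expansion. Given $k\in\mathcal{K}$, set $f = T^{-1}k\in\mathcal{H}$. Since $\{(W_j, P_{W_j})\}_{j\in J}$ is an $f$-basis system, the series $f = \sum_{j\in J}P_{W_j}f$ converges in $\mathcal{H}$. Apply the bounded operator $T$ termwise; by continuity
\[
k = Tf = \sum_{j\in J}TP_{W_j}f = \sum_{j\in J}TP_{W_j}T^{-1}k = \sum_{j\in J}Q_{V_j}k,
\]
and each summand $Q_{V_j}k = T(P_{W_j}f)$ lies in $V_j$. So every $k\in\mathcal{K}$ has at least one representation of the required form.

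For uniqueness, suppose $\sum_{j\in J}h_j = 0$ with $h_j\in V_j$. Write $h_j = Tg_j$ for a unique $g_j\in W_j$ (using that $T|_{W_j}$ is a bijection onto $V_j$). The partial sums $\sum_{j\in F}h_j = T\bigl(\sum_{j\in F}g_j\bigr)$ converge to $0$ in $\mathcal{K}$; applying the bounded operator $T^{-1}$ we conclude that $\sum_{j\in J}g_j$ converges to $0$ in $\mathcal{H}$. Uniqueness of the $f$-basis expansion for $\{W_j\}_{j\in J}$ forces $g_j = 0$, hence $h_j = 0$ for all $j$. This proves $\{V_j\}_{j\in J}$ is an $f$-basis for $\mathcal{K}$, and because $Q_{V_j}k\in V_j$ gives the valid expansion $k = \sum_{j}Q_{V_j}k$, the uniqueness part of Proposition 2.10 (or directly the definition of $f$-dual sequence) identifies $\{Q_{V_j}\}_{j\in J}$ as the $f$-dual sequence of $\{V_j\}_{j\in J}$, so $\{(V_j, Q_{V_j})\}_{j\in J}$ is an $f$-basis system for $\mathcal{K}$.

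The only mildly delicate step is the interchange of $T$ with the infinite sum in the existence argument and of $T^{-1}$ with the infinite sum in the uniqueness argument; both are immediate from boundedness (equivalently, continuity) of $T$ and $T^{-1}$ applied to partial sums, so there is no genuine obstacle.
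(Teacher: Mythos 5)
Your proposal is correct and follows essentially the same route as the paper: transport the expansion of $T^{-1}k$ along the bounded invertible operator $T$ to get existence, pull back along $T^{-1}$ for uniqueness, and verify the biorthogonality relation $Q_{V_i}h_j=\delta_{ij}h_j$ to identify $\{Q_{V_j}\}_{j\in J}$ as the $f$-dual sequence. The paper's proof is just a terser version of yours; your extra checks (closedness of $V_j$, the projection identities, the continuity justification for interchanging $T$ with the sums) fill in details the paper leaves implicit.
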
\begin{proof} Suppose that $f\in\mathcal{K}$, then we can write $f=Tg$ for some $g\in\mathcal{H}$.
By hypothesis $g$ has an unique expansion to form $g=\sum_{j\in}g_j$ for some sequence $\{g_j:\;g_j\in W_j\}_{j\in J}$ which implies
that $f$ has an unique expansion of the form $f=\sum_{j\in J}f_j$ with $f_j=Tg_j$ for all $j\in J$. We also have\begin{align*} Q_{V_i}f_j=
TP_{W_i}T^{-1}f_j=T(\delta_{ij}T^{-1}f_j)=\delta_{ij}f_j\end{align*} for arbitrary sequence $\{f_j:\;f_j\in V_j\}_{j\in J}$. From this the result
follows.\end{proof}\begin{definition} Let $\{W_j\}_{j\in J}$ be a sequence of closed subspaces of $\mathcal{H}$, then $\mathcal{W}=\{W_j\}_{j\in J}$
is called a Riesz fusion basis or simply Riesz $f$-basis for $\mathcal{H}$ if there is an orthonormal $f$-basis $\{V_j\}_{j\in J}$ for $\mathcal{H}$
and a bounded invertible linear operator $T$ on $\mathcal{H}$ such that $TV_j=W_j$ for all $j\in J$. By Theorem 3.7 if $\{P_{W_j}\}_{j\in J}$
is $f$-dual sequence of $\{W_j\}_{j\in J}$, then $P_{W_j}=T\pi_{V_j}T^{-1}$ for all $j\in J$.\end{definition}
\begin{example} Let $\{f_j\}_{j\in J}=\{T(e_j)\}_{j\in J}$ be a Riesz basis for $\mathcal{H}$ and let $W_j=\Span\{f_j\}$ for all $j\in J$. Then
$\mathcal{W}=\{W_j\}_{j\in J}$ is a Riesz $f$-basis for $\mathcal{H}$. Because if $V_j=\Span\{e_j\}$, then $\{V_j\}_{j\in J}$ is an orthonormal $f$-basis and
$W_j=TV_j$ for all $j\in J$.\end{example}\begin{corollary} If $\{(W_j, P_{W_j})\}_{j\in J}$ is a Riesz $f$-basis system for $\mathcal{H}$. Then
$$0<\inf_{j\in J}\|P_{W_j}\|\leq\sup_{j\in J}\|P_{W_j}\|<\infty.$$ \end{corollary}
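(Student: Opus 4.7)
The plan is to exploit the similarity structure encoded in Definition~3.8. By the definition of a Riesz $f$-basis, there exist an orthonormal $f$-basis $\{V_j\}_{j\in J}$ for $\mathcal{H}$ and a bounded invertible $T\in B(\mathcal{H})$ with $W_j=TV_j$; moreover, by the remark closing that definition, the dual projections satisfy $P_{W_j}=T\pi_{V_j}T^{-1}$. Thus the entire problem reduces to controlling the norms of the orthogonal projections $\pi_{V_j}$ and transporting the estimates through $T$ and $T^{-1}$.

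First I would record the uniform normalization $\|\pi_{V_j}\|=1$, valid for each $j$ provided the $V_j$ are nontrivial (the standing implicit assumption throughout the paper; otherwise $P_{W_j}=0$ and the lower bound fails trivially). With this in hand, the upper estimate is immediate from submultiplicativity of the operator norm:
\[
\|P_{W_j}\|=\|T\pi_{V_j}T^{-1}\|\leq\|T\|\,\|\pi_{V_j}\|\,\|T^{-1}\|=\|T\|\,\|T^{-1}\|,
\]
which is finite and independent of $j$, so $\sup_{j\in J}\|P_{W_j}\|<\infty$.

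For the lower bound I would invert the similarity to obtain $\pi_{V_j}=T^{-1}P_{W_j}T$ and again apply submultiplicativity:
\[
1=\|\pi_{V_j}\|\leq\|T^{-1}\|\,\|P_{W_j}\|\,\|T\|,
\]
so $\|P_{W_j}\|\geq\bigl(\|T\|\,\|T^{-1}\|\bigr)^{-1}>0$ uniformly in $j$, giving $\inf_{j\in J}\|P_{W_j}\|>0$.

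No substantive obstacle is anticipated: the statement is essentially the observation that similarity preserves projection norms up to the factor $\|T\|\,\|T^{-1}\|$ (the condition number of $T$). The only point requiring a moment's attention is the normalization $\|\pi_{V_j}\|=1$, which is the standard fact that a nonzero orthogonal projection has norm one and is guaranteed here by the implicit nontriviality of the subspaces in play.
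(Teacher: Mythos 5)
Your argument is correct and is essentially the paper's own proof: the paper likewise invokes $P_{W_j}=T\pi_{V_j}T^{-1}$ from Definition~3.8 and asserts the two-sided bound $\|T^{-1}\|^{-1}\|T\|^{-1}\leq\|P_{W_j}\|\leq\|T\|\,\|T^{-1}\|$, which is exactly your submultiplicativity computation in both directions. Your only addition is to make explicit the normalization $\|\pi_{V_j}\|=1$ and the nontriviality caveat, which the paper leaves implicit.
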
\begin{proof} According to the definition we can write $\{W_j\}_{j\in J}
=\{TV_j\}_{j\in J}$, where $T$ is a bounded invertible operator and $\{V_j\}_{j\in J}$ is an orthonormal $f$-basis for $\mathcal{H}$. Since for every $j\in J$
we have $$\|T^{-1}\|^{-1}\|T\|^{-1}\leq\|P_{W_j}\|\leq\|T\|\|T^{-1}\|.$$ Hence from this the result follows.\end{proof}
\begin{proposition} Let $\{W_j\}_{j\in J}=\{TV_j\}_{j\in J}$ be a Riesz $f$-basis for $\mathcal{H}$ and let $\{f_j:\;f_j\in V_j\}_{j\in J}$ and
$\{g_j:\;g_j\in W_j\}_{j\in J}$ be sequences in $\mathcal{H}$ such that $g_j=Tf_j$ for all $j\in J$. Then $\sum_{j\in J}g_j$ is convergent
if and only if $\sum_{j\in J}f_j$ is convergent.\end{proposition}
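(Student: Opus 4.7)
The plan is to exploit the fact that a bounded invertible linear operator is a topological linear isomorphism of $\mathcal{H}$, and hence preserves convergence of series in both directions. Since $T$ and $T^{-1}$ are both continuous and linear, applying either of them termwise to a finite sum commutes with the summation.

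First I would fix an arbitrary finite subset $F\subset J$ and use linearity to write
\[
\sum_{j\in F}g_j=\sum_{j\in F}Tf_j=T\Bigl(\sum_{j\in F}f_j\Bigr),\qquad \sum_{j\in F}f_j=T^{-1}\Bigl(\sum_{j\in F}g_j\Bigr).
\]
For the forward implication, assume $\sum_{j\in J}f_j$ converges to some $f\in\mathcal{H}$. The continuity of $T$ applied to the partial sums then gives
\[
\sum_{j\in F}g_j=T\Bigl(\sum_{j\in F}f_j\Bigr)\longrightarrow Tf,
\]
so $\sum_{j\in J}g_j$ converges (to $Tf$). The reverse implication is identical with the roles of $T$ and $T^{-1}$ interchanged: if $\sum_{j\in J}g_j$ converges to some $g\in\mathcal{H}$, then by continuity of $T^{-1}$ the partial sums $T^{-1}(\sum_{j\in F}g_j)=\sum_{j\in F}f_j$ converge to $T^{-1}g$.

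There is no real obstacle; the only thing to be careful about is the mode of convergence. Whether the series are interpreted as ordinary convergence along an enumeration of $J$ (as in Definition 2.1) or as unconditional convergence along finite subsets directed by inclusion, the argument above goes through unchanged, because in each case the net of partial sums is mapped to the corresponding net on the other side by the homeomorphism $T$ (or $T^{-1}$). As a by-product the proof shows that the limits are related by $\sum_{j\in J}g_j=T\bigl(\sum_{j\in J}f_j\bigr)$.
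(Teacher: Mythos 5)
Your proof is correct and rests on the same idea as the paper's: the paper notes the two-sided estimate $\|T^{-1}\|^{-1}\bigl\|\sum_{j\in F}f_j\bigr\|\leq\bigl\|\sum_{j\in F}g_j\bigr\|\leq\|T\|\bigl\|\sum_{j\in F}f_j\bigr\|$ on finite partial sums, which is just the Cauchy-criterion phrasing of your observation that the bounded invertible operator $T$ carries the net of partial sums of one series homeomorphically onto that of the other.
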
\begin{proof} This follows immediately from the fact that for each finite subset
$F\subset J$ we have $$\|T^{-1}\|^{-1}\big\|\sum_{j\in F}f_j\big\|\leq\big\|\sum_{j\in F}g_j\big\|\leq\|T\|\big\|\sum_{j\in F}f_j\big\|.$$\end{proof}
\begin{definition} A family of bounded operators $\{T_j\}_{j\in J}$ on $\mathcal{H}$ is called a resolution of the identity on $\mathcal{H}$, if for all
$f\in\mathcal{H}$ we have $f=\sum_{j\in J}T_jf$.\end{definition} The following result shows another way to obtain a resolution of the identity from
a Riesz $f$-basis.\begin{proposition} Let $\{W_j\}_{j\in J}=\{TV_j\}_{j\in J}$ be a Riesz $f$-basis for $\mathcal{H}$. Then
\begin{itemize}\item [$(i)$] $\{S_j\}_{j\in J}$ defined by $S_j=T^{-1}\pi_{V_j}T,\;j\in J$ is a resolution of the identity on $\mathcal{H}$.\item [$(ii)$]
$\{U_j\}_{j\in J}$ defined by $U_j=T^{*}\pi_{V_j}(T^*)^{-1},\;j\in J$ is a resolution of the identity on $\mathcal{H}$.\item [$(iii)$]
$\{R_j\}_{j\in J}$ defined by $R_j=(T^*)^{-1}\pi_{V_j}T^*,\;j\in J$ is a resolution of the identity on $\mathcal{H}$.\end{itemize}
\end{proposition}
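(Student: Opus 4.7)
The key observation is that since $\{V_j\}_{j\in J}$ is an orthonormal $f$-basis for $\mathcal{H}$, Theorem 3.6 yields the expansion $h=\sum_{j\in J}\pi_{V_j}h$ for every $h\in\mathcal{H}$. Each of the three statements should then follow by applying this identity to a suitably chosen vector and then composing with a bounded operator on the outside, using that continuous linear maps commute with norm-convergent series. Because $T$ is bounded and invertible, $T^{*}$ is bounded and invertible with $(T^{*})^{-1}=(T^{-1})^{*}$, so every operator appearing in the statement is a bona fide element of $B(\mathcal{H})$.

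For part $(i)$, the plan is to fix $f\in\mathcal{H}$, set $h=Tf\in\mathcal{H}$, apply the orthonormal expansion to get $Tf=\sum_{j\in J}\pi_{V_j}(Tf)$, and then apply the bounded operator $T^{-1}$ term by term to obtain $f=\sum_{j\in J}T^{-1}\pi_{V_j}Tf=\sum_{j\in J}S_j f$. For part $(iii)$, I would run the same argument with $h=T^{*}f$ and then apply $(T^{*})^{-1}$ to both sides, which gives $f=\sum_{j\in J}(T^{*})^{-1}\pi_{V_j}T^{*}f=\sum_{j\in J}R_j f$. For part $(ii)$, the natural choice is $h=(T^{*})^{-1}f$; applying the orthonormal expansion and then the bounded operator $T^{*}$ yields $f=\sum_{j\in J}T^{*}\pi_{V_j}(T^{*})^{-1}f=\sum_{j\in J}U_j f$.

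There is no real obstacle here: the only subtlety is ensuring that $(T^{*})^{-1}$ is legitimate and bounded, which is standard, and that one is permitted to pull a bounded operator inside an infinite sum that converges in norm, which is immediate from continuity. Thus all three families satisfy the defining equation of Definition 3.12 and are resolutions of the identity on $\mathcal{H}$.
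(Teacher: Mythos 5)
Your proposal is correct and is exactly the argument the paper has in mind: the paper's own proof is just the one-line remark ``This follows immediately from the definition,'' and your expansion of $Tf$, $T^{*}f$, and $(T^{*})^{-1}f$ via Theorem 3.6 followed by applying the appropriate bounded inverse term by term is the standard way to fill in those details.
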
\begin{proof} This follows immediately from the definition.\end{proof} To check Riesz $f$-baseness of a family of
closed subspaces $\{W_i\}_{i\in I}$, we derive the following useful characterization.
\begin{theorem} Let $\{W_j\}_{j\in J}$ be a family of closed subspaces of $\mathcal{H}$. Then the following statements are equivalent.
\begin{itemize}\item [$(i)$] $\{W_j\}_{j\in J}$ is a Riesz $f$-basis for $\mathcal{H}$.\item [$(ii)$] There is an equivalent inner product on $\mathcal{H},$
with respect to which the sequence $\{W_j\}_{j\in J}$ becomes an orthonormal $f$-basis for $\mathcal{H}$.\item [$(iii)$] The family $\{W_j\}_{j\in J}$ is
complete for $\mathcal{H}$ and there exist positive constants $A, B$ such that for any finite subset $F\subset J$ and arbitrary vectors
$g_j\in W_j$, we have $$A\sum_{j\in F}\|g_j\|^2\leq\big\|\sum_{j\in F}g_j\big\|^2\leq
B\sum_{j\in F}\|g_j\|^2.$$\end{itemize}\end{theorem}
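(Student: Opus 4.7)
The plan is to prove the cycle $(i)\Rightarrow(ii)\Rightarrow(iii)\Rightarrow(i)$.

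For $(i)\Rightarrow(ii)$, I assume $W_j=TV_j$ with $\{V_j\}_{j\in J}$ an orthonormal $f$-basis and $T$ bounded invertible on $\mathcal{H}$. I introduce the new inner product $<f,g>_T:=<T^{-1}f,T^{-1}g>$. The induced norm satisfies $\|T\|^{-1}\|f\|\leq\|f\|_T\leq\|T^{-1}\|\|f\|$, so $<\cdot,\cdot>_T$ is equivalent to $<\cdot,\cdot>$. By construction $T$ becomes a unitary from $(\mathcal{H},<\cdot,\cdot>)$ onto $(\mathcal{H},<\cdot,\cdot>_T)$, and the orthogonal projection onto $W_j$ in the new inner product equals $T\pi_{V_j}T^{-1}$. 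Transporting the orthonormality and completeness of $\{V_j\}_{j\in J}$ along $T$ then exhibits $\{W_j\}_{j\in J}$ as an orthonormal $f$-basis of $(\mathcal{H},<\cdot,\cdot>_T)$.

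For $(ii)\Rightarrow(iii)$, let $<\cdot,\cdot>_T$ be the equivalent inner product in which $\{W_j\}_{j\in J}$ is an orthonormal $f$-basis. Completeness in norm is preserved by passing to an equivalent inner product. Applying Theorem 3.3 inside $(\mathcal{H},<\cdot,\cdot>_T)$ gives the Parseval-type identity $\|\sum_{j\in F}g_j\|_T^2=\sum_{j\in F}\|g_j\|_T^2$. Choosing constants $c,C>0$ with $c\|f\|\leq\|f\|_T\leq C\|f\|$ and feeding them into this identity yields the required two-sided inequality with $A=c^2/C^2$ and $B=C^2/c^2$.

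For $(iii)\Rightarrow(i)$, the two-sided estimate immediately supplies $\|\sum_{j\in F}g_j\|^2\leq(B/A)\|\sum_{j\in G}g_j\|^2$ for all finite $F\subset G\subset J$, so Theorem 2.8 (with $M=\sqrt{B/A}$) shows that $\{W_j\}_{j\in J}$ is already a $f$-basis. Theorems 2.12 and 2.14 then make it simultaneously a Bessel and a Hilbert $f$-basis, so the synthesis operator $T_W:(\sum_{j\in J}\oplus W_j)_{\ell^2}\to\mathcal{H}$, $\{g_j\}_{j\in J}\mapsto\sum_{j\in J}g_j$, is a bounded linear bijection, hence has a bounded inverse by the open mapping theorem. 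By Theorem 2.4 we have $\dim\mathcal{H}=\sum_{j\in J}\dim W_j$, so I partition an orthonormal basis of $\mathcal{H}$ into blocks of cardinality $\dim W_j$ and let $V_j$ be the closed span of the $j$-th block. Then $\{V_j\}_{j\in J}$ is an orthonormal $f$-basis, and the associated synthesis map $T_V:(\sum_{j\in J}\oplus V_j)_{\ell^2}\to\mathcal{H}$ is a unitary. Since $\dim V_j=\dim W_j$, I pick unitaries $U_j:V_j\to W_j$ and assemble them into a unitary $\Phi:\{v_j\}_{j\in J}\mapsto\{U_jv_j\}_{j\in J}$ between the two $\ell^2$-sums. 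The composition $S:=T_W\circ\Phi\circ T_V^{-1}$ is a bounded invertible operator on $\mathcal{H}$ with $SV_j=W_j$ for every $j\in J$, so $\{W_j\}_{j\in J}$ is a Riesz $f$-basis.

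The main obstacle is the step $(iii)\Rightarrow(i)$: the inequalities by themselves only give the Bessel/Hilbert structure together with a bounded isomorphism $T_W$ between two $\ell^2$-sums, but the definition of Riesz $f$-basis demands an invertible operator on $\mathcal{H}$ itself. The dimension identity of Theorem 2.4 is essential here, since without it one cannot manufacture the matching orthonormal $f$-basis $\{V_j\}_{j\in J}$ needed to splice $T_W$, $\Phi$ and $T_V^{-1}$ into a single operator on $\mathcal{H}$.
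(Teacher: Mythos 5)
Your proofs of $(i)\Rightarrow(ii)$ and $(ii)\Rightarrow(iii)$ coincide with the paper's: the same renorming $<f,g>_{T}=<T^{-1}f,T^{-1}g>$, and the same norm-equivalence constants fed into the Parseval identity of Theorem 3.3. The genuine divergence is in $(iii)\Rightarrow(i)$. The paper takes an \emph{arbitrary} orthonormal $f$-basis $\{V_j\}_{j\in J}$ and defines $T$ directly by the formula $T\pi_{V_j}f=\pi_{W_j}f$ (and $S\pi_{W_j}f=\pi_{V_j}f$), reads off boundedness from the two-sided inequality, and concludes $TS=ST=Id_{\mathcal{H}}$. You instead first upgrade $(iii)$ to the statement that $\{W_j\}_{j\in J}$ is a $f$-basis via Theorem 2.8 (your constant $M=\sqrt{B/A}$ is correct), deduce that the synthesis operator $T_W$ on $\big(\sum_{j\in J}\oplus W_j\big)_{\ell^2}$ is a bounded bijection with bounded inverse (here you want Theorems 2.12 and 2.13; your citation of 2.14, the duality statement, is a slip), and only then manufacture an orthonormal $f$-basis $\{V_j\}_{j\in J}$ with $\dim V_j=\dim W_j$ by partitioning an orthonormal basis of $\mathcal{H}$ into blocks, which the dimension identity of Theorem 2.4 makes possible; the factorization $S=T_W\circ\Phi\circ T_V^{-1}$ then produces the required invertible operator with $SV_j=W_j$. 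Your route is longer but buys real rigor: the paper's prescription $T\pi_{V_j}f=\pi_{W_j}f$ is not obviously well defined ($\pi_{V_j}f$ determines $f$ only modulo $V_j^{\perp}$, and nothing forces $\pi_{W_j}$ to vanish there), and the paper never explains why an arbitrary orthonormal $f$-basis should have $\dim V_j=\dim W_j$, which is necessary for any single invertible operator to carry each $V_j$ onto the corresponding $W_j$. Your explicit construction of a dimension-matched $\{V_j\}_{j\in J}$ and of the blockwise unitaries $U_j$ addresses exactly these two gaps, so the argument stands.
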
\begin{proof}
$(i)\Rightarrow(ii)$ Assume that $\{W_j\}_{j\in J}$ is a Riesz $f$-basis, and write it in the form $\{TV_j\}_{j\in J}$ as in the definition. Define
a new inner product $<. , .>_{T}$ on $\mathcal{H}$ by $$<f, g>_{T}=<T^{-1}f, T^{-1}g>\;\;\;\;\;\forall f,g\in\mathcal{H}.$$ If $\|.\|_{T}$ is the norm
generated by this inner product, then for all $f\in\mathcal{H}$ we have $$\|T\|^{-1}\|f\|\leq\|f\|_{T}\leq\|T^{-1}\|\|f\|,$$ which implies that the new
inner product is equivalent to the original one. Let $\{P_{W_j}\}_{j\in J}$ be $f$-dual sequence of $\{W_j\}_{j\in J}$, then we have\begin{align*}
<P_{W_j}f, g>_{T}&=<T\pi_{V_j}T^{-1}f, g>_{T}=<\pi_{V_j}T^{-1}f, T^{-1}g>\\&=<T^{-1}f, \pi_{V_j}T^{-1}g>=<f, P_{W_j}g>_{T}.\end{align*} It follows that
$P_{W_j}:\mathcal{H}\rightarrow W_j$ is an orthogonal projection with respect to $<. , .>_{T}$, thus $\{W_j\}_{j\in J}$ is an orthonormal $f$-basis with
respect to inner product $<. , .>_{T}$.\par$(ii)\Rightarrow(iii)$ Suppose that $<. , .>_1$ is an equivalent inner product on $\mathcal{H}$ and let
$\{W_j\}_{j\in J}$ be an orthonormal $f$-basis with respect to $<. , .>_1$. Then there exist positive constants $m, M$ such that $$m\|f\|\leq\|f\|_1
\leq M\|f\|\;\;\;\;\;\forall f\in\mathcal{H}.$$ For any finite subset $F\subset J$ and arbitrary sequence $\{g_j:\;g_j\in W_j\}_{j\in J}$ we obtain
\begin{align*}\frac{m^2}{M^2}\sum_{j\in F}\|g_j\|^2&\leq\frac{1}{M^2}\sum_{j\in F}\|g_j\|_{1}^2=\frac{1}{M^2}\big\|\sum_{j\in F}g_j\big\|_{1}^2\\&\leq
\big\|\sum_{j\in F}g_j\big\|^2\leq\frac{1}{m^2}\big\|\sum_{j\in F}g_j\big\|_{1}^2\\&=\frac{1}{m^2}\sum_{j\in F}\|g_j\|_{1}^2\leq\frac{M^2}{m^2}
\sum_{j\in F}\|g_j\|^2.\end{align*}\par $(iii)\Rightarrow(i)$ Let $\{V_j\}_{j\in J}$ be an arbitrary orthonormal $f$-basis for $\mathcal{H}$.
Define the mapping $$T:\mathcal{H\rightarrow\mathcal{H}},\;\;\;\;\;\;\text{with}\;\;\;\;\; T\pi_{V_j}f=\pi_{W_j}f\;\;\;\;\;\forall
f\in\mathcal{H},\;\;j\in J.$$ For every sequence $\{f_j:\;f_j\in V_j\}_{j\in J}$ we have \begin{align*}
\big\|T(\sum_{j\in J}f_j)\big\|^2=\big\|\sum_{j\in J}\pi_{W_j}f_j\big\|^2\leq B\sum_{j\in J}
\|\pi_{W_j}f_j\|^2\leq B\big\|\sum_{j\in J}f_j\big\|^2,\end{align*} it follows that $T$ is a bounded linear operator on $\mathcal{H}$.
Similarly, define the mapping $$S:\mathcal{H}\rightarrow\mathcal{H},\;\;\;\;\;\;\text{with}\;\;\;\;\; S\pi_{W_j}f=\pi_{V_j}f
\;\;\;\;\;\forall f\in\mathcal{H},\;\;j\in J.$$ We also obtain\begin{align*}\big\|S(\sum_{j\in J}g_j)\big\|^2=
\big\|\sum_{j\in J}\pi_{V_j}g_j\big\|^2\leq \sum_{j\in J}\|g_j\|^2\leq \frac{1}{A}\big\|\sum_{j\in J}g_j\big\|^2,
\end{align*} for any sequence $\{g_j:\;g_j\in W_j\}_{j\in J}$. Since $\{W_j\}_{j\in J}$ is complete thus $S$ is also a linear bounded operator on
$\mathcal{H}$ and $TS=ST=Id_{\mathcal{H}}$. Hence $T$ is invertible and $TV_j=W_j$ for all $j\in J$.\end{proof}\begin{corollary} $\{W_j\}_{j\in J}$
is a Riesz $f$-basis for $\mathcal{H}$ if and only if it is both a Bessel $f$-basis and a Hilbert $f$-basis for $\mathcal{H}$.\end{corollary}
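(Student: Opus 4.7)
The plan is to view this corollary as an immediate synthesis of Theorem 3.14(iii) with Theorems 2.12 and 2.13. Indeed, Theorem 3.14 characterizes a Riesz $f$-basis (among complete sequences of closed subspaces) by the two-sided inequality
\[
A\sum_{j\in F}\|g_j\|^2\leq\big\|\sum_{j\in F}g_j\big\|^2\leq B\sum_{j\in F}\|g_j\|^2,
\]
while Theorem 2.12 characterizes a Bessel $f$-basis by the left inequality alone and Theorem 2.13 characterizes a Hilbert $f$-basis by the right inequality alone. So the corollary will drop out by splitting the two-sided bound into its halves and re-assembling.

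For the forward direction, I would assume $\{W_j\}_{j\in J}$ is a Riesz $f$-basis and invoke $(i)\Rightarrow(iii)$ of Theorem 3.14 to obtain positive constants $A,B$ with the two-sided inequality above. Being a Riesz $f$-basis, $\{W_j\}_{j\in J}$ is in particular a $f$-basis, so Theorem 2.12 applied to the left-hand inequality gives that it is Bessel and Theorem 2.13 applied to the right-hand inequality gives that it is Hilbert.

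For the converse, I would assume $\{W_j\}_{j\in J}$ is both a Bessel and a Hilbert $f$-basis. By Definition 2.11 both notions presuppose that $\{W_j\}_{j\in J}$ is a $f$-basis, hence by the remark after Definition 2.7 it is complete for $\mathcal{H}$. Theorem 2.12 then furnishes a constant $A>0$ with $A\sum_{j\in F}\|g_j\|^2\leq\|\sum_{j\in F}g_j\|^2$ and Theorem 2.13 furnishes a constant $B>0$ with $\|\sum_{j\in F}g_j\|^2\leq B\sum_{j\in F}\|g_j\|^2$. Combining these yields condition $(iii)$ of Theorem 3.14, and so $(iii)\Rightarrow(i)$ of that theorem concludes that $\{W_j\}_{j\in J}$ is a Riesz $f$-basis.

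There is essentially no obstacle here; the only subtlety worth flagging is that in the converse the completeness hypothesis demanded by Theorem 3.14(iii) is not stated explicitly but is already built into Definition 2.11, and must be pointed out so the reader sees that Theorem 3.14(iii) actually applies.
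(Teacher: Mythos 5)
Your proposal is correct and is exactly the intended derivation: the paper states this corollary without proof, treating it as an immediate consequence of combining the two-sided inequality in Theorem 3.14(iii) with the one-sided characterizations in Theorems 2.12 and 2.13, which is precisely what you do. Your remark that the completeness required in Theorem 3.14(iii) comes for free from the $f$-basis hypothesis built into Definition 2.11 is a worthwhile clarification rather than a deviation.
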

\begin{corollary} Let $\{W_j\}_{j\in J}$ be a family of closed subspaces of $\mathcal{H}$ and for each $j\in J$ let $\{e_{ij}\}_{i\in I_j}$ be an orthonormal
basis for each subspace $W_j$. Then the following conditions are equivalent.\begin{enumerate}\item [$(i)$] $\{W_j\}_{j\in J}$ is a Riesz $f$-basis for $\mathcal{H}$.\item [$(ii)$] $\{e_{ij}\}_{j\in J, i\in I_j}$ is a Riesz basis for $\mathcal{H}$.\end{enumerate}\end{corollary}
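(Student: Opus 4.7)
The plan is to reduce the equivalence to Theorem 3.14(iii), which characterizes a Riesz $f$-basis by completeness together with the two-sided inequality
$$A\sum_{j\in F}\|g_j\|^2 \leq \Big\|\sum_{j\in F}g_j\Big\|^2 \leq B\sum_{j\in F}\|g_j\|^2,$$
and to the classical characterization of a Riesz basis $\{f_k\}$ for $\mathcal{H}$: namely, $\{f_k\}$ is a Riesz basis if and only if it is complete and there exist constants $A,B>0$ with $A\sum|c_k|^2 \leq \|\sum c_k f_k\|^2 \leq B\sum|c_k|^2$ for all finite scalar sequences. The strategy is to translate between coefficients $c_{ij}$ of the $e_{ij}$'s and the block vectors $g_j = \sum_i c_{ij}e_{ij}\in W_j$, exploiting that $\|g_j\|^2 = \sum_i |c_{ij}|^2$ by orthonormality inside each $W_j$.

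First I would dispose of completeness. Since $W_j = \overline{\Span}\{e_{ij}\}_{i\in I_j}$ for every $j$, we clearly have $\overline{\Span}\{W_j\}_{j\in J} = \overline{\Span}\{e_{ij}\}_{j\in J,\,i\in I_j}$, so $\{W_j\}_{j\in J}$ is complete in $\mathcal{H}$ if and only if $\{e_{ij}\}_{j\in J,\,i\in I_j}$ is complete.

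Next comes the transfer of the two-sided inequality, which is the main content. For $(i)\Rightarrow(ii)$: given any finite family of scalars $\{c_{ij}\}_{(i,j)\in E}$ indexed by a finite set $E$ of pairs, set $F = \{j : (i,j)\in E \text{ for some } i\}$ and $g_j = \sum_{i:\,(i,j)\in E}c_{ij}e_{ij}\in W_j$. Then $\sum_{j\in F}g_j = \sum_{(i,j)\in E}c_{ij}e_{ij}$ and $\sum_{j\in F}\|g_j\|^2 = \sum_{(i,j)\in E}|c_{ij}|^2$, so Theorem 3.14(iii) applied to $\{W_j\}$ immediately yields the Riesz inequality for the $e_{ij}$'s with the same constants. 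For $(ii)\Rightarrow(i)$: given a finite $F\subset J$ and arbitrary $g_j\in W_j$ for $j\in F$, expand $g_j = \sum_{i\in I_j}c_{ij}e_{ij}$, truncate to finite subsets $E_j^{(n)}\subset I_j$ with $g_j^{(n)} = \sum_{i\in E_j^{(n)}}c_{ij}e_{ij}\to g_j$ in norm as $n\to\infty$. For each $n$ the Riesz inequality gives
$$A\sum_{j\in F}\|g_j^{(n)}\|^2 \leq \Big\|\sum_{j\in F}g_j^{(n)}\Big\|^2 \leq B\sum_{j\in F}\|g_j^{(n)}\|^2,$$
and passing to the limit (finite sums over $F$ pose no convergence issues) recovers the required inequality for the $g_j$. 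Combined with the completeness step, Theorem 3.14 then gives that $\{W_j\}_{j\in J}$ is a Riesz $f$-basis.

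The only mildly delicate point is the truncation-and-limit step when some $W_j$ is infinite-dimensional; however, since $F$ is finite this is just continuity of the norm applied to finitely many convergent sequences, so no real obstacle arises. Everything else is bookkeeping of the bijection between block coordinates and flat coordinates.
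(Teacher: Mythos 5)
Your proposal is correct and follows essentially the same route as the paper: both directions reduce to Theorem 3.14(iii) together with the classical finite-sum characterization of Riesz bases (the paper cites [4, Theorem 3.6.6]), translating between the scalars $c_{ij}$ and the block vectors $g_j=\sum_i c_{ij}e_{ij}$ via $\|g_j\|^2=\sum_i|c_{ij}|^2$. You are in fact slightly more careful than the paper on two points it leaves implicit --- the transfer of completeness between $\{W_j\}_{j\in J}$ and $\{e_{ij}\}_{j\in J, i\in I_j}$, and the truncation-and-limit step needed when some $W_j$ is infinite-dimensional --- but these are refinements of, not departures from, the same argument.
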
\begin{proof}
$(i)\Rightarrow (ii)$ Assume that $\{W_j\}_{j\in J}$ is a Riesz $f$-basis, then by Theorem 3.14 there exist constants $A, B>0$ such that
for any finite subset $F\subset J$ and arbitrary vectors $g_j\in W_j$ $$A\sum_{j\in F}\|g_j\|^2\leq\big\|\sum_{j\in F}g_j\big\|^2\leq
B\sum_{j\in F}\|g_j\|^2.$$ Let $F\subset J$ and $G_j\subset I_j$ be arbitrary finite subsets, then for all arbitrary scalars
$\{c_{ij}\}_{j\in F, i\in G_j}$ we have \begin{align*} A\sum_{j\in F}\sum_{i\in G_j}|c_{ij}|^2&=
A\sum_{j\in F}\big\|\sum_{i\in G_j}c_{ij}e_{ij}\big\|^2\leq\big\|\sum_{j\in F}\sum_{i\in G_j}c_{ij}e_{ij}\big\|^2
\\&\leq B\sum_{j\in F}\big\|\sum_{i\in G_j}c_{ij}e_{ij}\big\|^2=B\sum_{j\in F}\sum_{i\in G_j}|c_{ij}|^2.\end{align*} By [4, Theorem 3.6.6] $\{e_{ij}\}
_{j\in J, i\in I_j}$ is a Riesz basis for $\mathcal{H}$.\par $(ii)\Rightarrow (i)$ Suppose that $\{e_{ij}\}_{j\in J, i\in I_j}$ is a Riesz basis for
$\mathcal{H}$, by [4, Theorem 3.6.6] there exist constants $A, B>0$ such that for any finite subsets $F\subset J, G_j\subset I_j$ and arbitrary scalars
$\{c_{ij}\}_{j\in F, i\in G_j}$ \begin{align*} A\sum_{j\in F}\sum_{i\in G_j}|c_{ij}|^2\leq\big\|\sum_{j\in F}\sum_{i\in G_j}c_{ij}e_{ij}\big\|^2
\leq B\sum_{j\in F}\sum_{i\in G_j}|c_{ij}|^2.\end{align*} Assume that $\{g_j:\;g_j\in W_j\}_{j\in J}$ is an arbitrary sequence, then
we have\begin{align*} A\sum_{j\in F}\|g_j\|^2&=A\sum_{j\in F}\sum_{i\in I_j}|<g_j, e_{ij}>|^2\leq\big\|\sum_{j\in F}\sum_{i\in I_j}
<g_j, e_{ij}>e_{ij}\big\|^2\\&\leq B\sum_{j\in F}\sum_{i\in I_j}|<g_j, e_{ij}>|^2=B\sum_{j\in F}\|g_j\|^2.\end{align*} Now we observe that $$\big\|\sum_{j\in F}g_j\big\|^2=\big\|\sum_{j\in F}\sum_{i\in I_j}<g_j, e_{ij}>e_{ij}\big\|^2.$$ By Theorem 3.14 the result follows at once.\end{proof}
Let us introduce a result due to Gavruta [8, Theorem 2.4].\begin{theorem} Let $\{(V_j, \alpha_j)\}_{j\in J}$ be a fusion frame with fusion frame bounds $C, D$.
If $T:\mathcal{H}\rightarrow\mathcal{H}$ is an invertible operator, then $\{(TV_j, \alpha_j)\}_{j\in J}$ is a fusion frame with fusion frame bounds
$$C\|T\|^{-2}\|T^{-1}\|^{-2}\;\;\;\;\;\text{and}\;\;\;\;\;D\|T\|^{2}\|T^{-1}\|^{2}.$$\end{theorem}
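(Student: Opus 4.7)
The plan is to reduce the conclusion to the fusion frame inequality for $\{(V_j,\alpha_j)\}_{j\in J}$ applied to the vector $T^*f$, by comparing $\pi_{TV_j}(f)$ with $\pi_{V_j}(T^*f)$. Note first that since $T$ is a bounded invertible operator, each $TV_j$ is a closed subspace of $\mathcal{H}$, so the orthogonal projections $\pi_{TV_j}$ are well defined.

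The key step is to establish the two--sided estimate
\begin{equation*}
\|T\|^{-1}\,\|\pi_{V_j}(T^*f)\|\;\le\;\|\pi_{TV_j}(f)\|\;\le\;\|T^{-1}\|\,\|\pi_{V_j}(T^*f)\|,
\end{equation*}
valid for every $f\in\mathcal{H}$ and every $j\in J$. I would obtain this from the variational characterization $\|\pi_W(v)\|=\sup\{|\langle v,w\rangle|:w\in W,\,\|w\|=1\}$ together with the bijection $g\mapsto Tg$ from $V_j$ onto $TV_j$ and the norm estimates $\|T^{-1}\|^{-1}\|g\|\le\|Tg\|\le\|T\|\|g\|$. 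For the upper bound: any unit vector $h\in TV_j$ can be written $h=Tg$ with $g\in V_j$ and $\|g\|\le\|T^{-1}\|$, and $\langle f,Tg\rangle=\langle T^*f,g\rangle=\langle \pi_{V_j}(T^*f),g\rangle$, which by Cauchy--Schwarz gives the stated bound on $\|\pi_{TV_j}(f)\|$. The lower bound is symmetric: for any unit $g\in V_j$, $Tg\in TV_j$ with $\|Tg\|\le\|T\|$, and $\langle T^*f,g\rangle=\langle f,Tg\rangle=\langle \pi_{TV_j}(f),Tg\rangle$ yields $\|\pi_{V_j}(T^*f)\|\le\|T\|\,\|\pi_{TV_j}(f)\|$.

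Squaring, multiplying by $\alpha_j^2$ and summing over $j\in J$ gives
\begin{equation*}
\|T\|^{-2}\sum_{j\in J}\alpha_j^2\|\pi_{V_j}(T^*f)\|^2\;\le\;\sum_{j\in J}\alpha_j^2\|\pi_{TV_j}(f)\|^2\;\le\;\|T^{-1}\|^2\sum_{j\in J}\alpha_j^2\|\pi_{V_j}(T^*f)\|^2.
\end{equation*}
Applying the fusion frame inequality for $\{(V_j,\alpha_j)\}$ to the vector $T^*f\in\mathcal{H}$ yields $C\|T^*f\|^2\le\sum_j\alpha_j^2\|\pi_{V_j}(T^*f)\|^2\le D\|T^*f\|^2$, and then using $\|T^{-1}\|^{-1}\|f\|\le\|T^*f\|\le\|T\|\,\|f\|$ (which follows from $\|T^*\|=\|T\|$ and $(T^*)^{-1}=(T^{-1})^*$) finishes the proof, producing exactly the claimed bounds $C\|T\|^{-2}\|T^{-1}\|^{-2}$ and $D\|T\|^2\|T^{-1}\|^2$.

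The main obstacle is the comparison step between $\pi_{TV_j}(f)$ and $\pi_{V_j}(T^*f)$: one is tempted to write $\pi_{TV_j}=T\pi_{V_j}T^{-1}$, but this is generally false because $T\pi_{V_j}T^{-1}$ is only an oblique projection onto $TV_j$ unless $T$ is unitary. Working instead through the adjoint pairing $\langle f,Tg\rangle=\langle T^*f,g\rangle$ bypasses this issue cleanly and is where the factors $\|T\|$ and $\|T^{-1}\|$ enter; the remaining computations are routine.
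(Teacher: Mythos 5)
Your proof is correct. Note that the paper itself gives no proof of this statement: it is quoted as a known result of Gavruta [8, Theorem 2.4]. Your argument is essentially the standard one from that source: the two-sided estimate $\|T\|^{-1}\|\pi_{V_j}(T^*f)\|\le\|\pi_{TV_j}(f)\|\le\|T^{-1}\|\,\|\pi_{V_j}(T^*f)\|$ is exactly what Gavruta extracts from the identity $\pi_{V}T^{*}=\pi_{V}T^{*}\pi_{TV}$ (applied once to $T$ and once to $T^{-1}$), and your variational derivation via $\|\pi_W(v)\|=\sup\{|\langle v,w\rangle|:w\in W,\ \|w\|=1\}$ is an equivalent, self-contained way to get it. Your remark that $T\pi_{V_j}T^{-1}$ is only an oblique projection onto $TV_j$ correctly identifies the one trap in this problem, and the remaining steps (summing with weights, applying the frame inequality at $T^{*}f$, and using $\|T^{-1}\|^{-1}\|f\|\le\|T^{*}f\|\le\|T\|\|f\|$) are all sound and yield precisely the stated bounds.
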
\begin{corollary} If $\{W_j\}_{j\in J}$ is
a Riesz $f$-basis for $\mathcal{H}$ then $\{(W_j , 1)\}_{j\in J}$ is a $1$-uniform fusion frame with fusion frame bounds $$\|T\|^{-2}\|T^{-1}\|^{-2}
\;\;\;\;\;\text{and}\;\;\;\;\;\|T\|^{2}\|T^{-1}\|^{2}.$$\end{corollary}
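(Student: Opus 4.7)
The plan is to derive this as a direct consequence of Theorem 3.17 (Gavruta) applied to the orthonormal $f$-basis underlying the Riesz $f$-basis. The only real content to check is that an orthonormal $f$-basis with all weights equal to $1$ is already a Parseval fusion frame; everything else is then a matter of quoting the previous result with $C=D=1$ and $\alpha_j=1$.

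First I would invoke the definition of a Riesz $f$-basis (Definition 3.8) to write $\{W_j\}_{j\in J}=\{TV_j\}_{j\in J}$ for some bounded invertible operator $T$ on $\mathcal{H}$ and some orthonormal $f$-basis $\{V_j\}_{j\in J}$ for $\mathcal{H}$. Then I would verify that $\{(V_j,1)\}_{j\in J}$ is a $1$-uniform Parseval fusion frame: by the equivalence $(i)\Leftrightarrow(iii)$ in Theorem 3.6 we have
\[
\|f\|^2=\sum_{j\in J}\|\pi_{V_j}f\|^2\qquad\forall f\in\mathcal{H},
\]
so $C=D=1$ and all weights equal to $1$ give a $1$-uniform fusion frame with bounds $1,1$.

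Next I would apply Theorem 3.17 with this fusion frame and the invertible operator $T$. Since $\alpha_j=1$ for every $j$, the transported family $\{(TV_j,1)\}_{j\in J}=\{(W_j,1)\}_{j\in J}$ is again $1$-uniform, and the theorem immediately supplies the fusion frame bounds
\[
1\cdot\|T\|^{-2}\|T^{-1}\|^{-2}\quad\text{and}\quad 1\cdot\|T\|^{2}\|T^{-1}\|^{2},
\]
which are exactly the bounds claimed in the corollary.

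There is no real obstacle here; the only step that requires any thought is recognizing that Theorem 3.6$(iii)$ is precisely the Parseval fusion frame identity for weights $1$, so that Theorem 3.17 is applicable in this form. After that the conclusion is a one-line substitution.
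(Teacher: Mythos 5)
Your proof is correct and matches the paper's intent exactly: the corollary is stated immediately after Theorem 3.17 with no separate argument, precisely because it follows by applying that theorem to the Parseval fusion frame $\{(V_j,1)\}_{j\in J}$ coming from the underlying orthonormal $f$-basis, which is what you do. The appeal to Theorem 3.6$(i)\Leftrightarrow(iii)$ to get $C=D=1$ is the right justification for that step.
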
 A fusion frame $\{(W_j,\alpha_j)\}_{j\in J}$ is called exact, if it ceases
to be a fusion frame whenever anyone of its element is deleted.\begin{theorem} Let $\{W_j\}_{j\in J}$ be a
Riesz $f$-basis for $\mathcal{H}$, then $\{(W_j, 1)\}_{j\in J}$ is a $1$-uniform exact fusion frame for
$\mathcal{H}$. But the opposite implication is not valid.\end{theorem}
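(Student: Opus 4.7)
The plan has three parts: (1) the fusion frame property, (2) exactness, and (3) a counterexample for the converse.

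For (1), I would invoke Corollary 3.18 directly: since $\{W_j\}_{j\in J}$ is a Riesz $f$-basis with $W_j=TV_j$ for some orthonormal $f$-basis $\{V_j\}_{j\in J}$ and bounded invertible $T$, the family $\{(W_j,1)\}_{j\in J}$ is a $1$-uniform fusion frame with bounds $\|T\|^{-2}\|T^{-1}\|^{-2}$ and $\|T\|^{2}\|T^{-1}\|^{2}$.

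For (2), fix any $j_0\in J$ and consider the family $\{W_j\}_{j\neq j_0}$. The key observation is that since $T$ is a bounded linear isomorphism of $\mathcal{H}$ onto itself,
\[
\overline{\Span}\{W_j\}_{j\neq j_0}=\overline{\Span}\{TV_j\}_{j\neq j_0}=T\big(\overline{\Span}\{V_j\}_{j\neq j_0}\big),
\]
where the second equality uses continuity of both $T$ and $T^{-1}$. Because $\{V_j\}_{j\in J}$ is an orthonormal $f$-basis, Theorem 3.6 gives that the $V_j$ are pairwise orthogonal with $\bigoplus_j V_j=\mathcal{H}$; in particular $\overline{\Span}\{V_j\}_{j\neq j_0}=V_{j_0}^{\perp}$, which is a proper closed subspace of $\mathcal{H}$ (assuming $V_{j_0}\neq\{0\}$, so that $W_{j_0}\neq\{0\}$). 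Invertibility of $T$ then forces $\overline{\Span}\{W_j\}_{j\neq j_0}$ to be a proper closed subspace of $\mathcal{H}$. Pick any nonzero $f$ in its orthogonal complement; then $\pi_{W_j}f=0$ for every $j\neq j_0$, so
\[
\sum_{j\neq j_0}\|\pi_{W_j}f\|^{2}=0<C\|f\|^{2}
\]
for every $C>0$. This violates the lower fusion frame inequality, showing that $\{(W_j,1)\}_{j\neq j_0}$ fails to be a fusion frame. Hence $\{(W_j,1)\}_{j\in J}$ is exact.

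For (3), I would exhibit a concrete example. Take $\mathcal{H}=\mathbb{C}^{3}$ with standard basis $\{e_1,e_2,e_3\}$, and set $W_1=\Span\{e_1,e_2\}$, $W_2=\Span\{e_2,e_3\}$. A direct computation gives $\|\pi_{W_1}f\|^{2}+\|\pi_{W_2}f\|^{2}=|a|^{2}+2|b|^{2}+|c|^{2}$ for $f=(a,b,c)$, so $\{(W_j,1)\}_{j=1}^{2}$ is a $1$-uniform fusion frame with bounds $1$ and $2$. Removing $W_1$ kills $e_1$ (the projection onto $W_2$ vanishes on $e_1$), and removing $W_2$ kills $e_3$, so the family is exact. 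However, it fails to be an $f$-basis at all: the vector $e_2\in W_1\cap W_2$ admits the two distinct representations $e_2=e_2+0=0+e_2$ with summands in $W_1,W_2$ respectively, violating uniqueness in Definition 2.1. A fortiori it is not a Riesz $f$-basis.

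The main obstacle is step (2): one must justify that passing the bounded invertible $T$ through the closed span is legitimate, and that $V_{j_0}^{\perp}$ is proper — both easy but essential — and then convert the orthogonality produced in the $V$-picture into the statement $\pi_{W_j}f=0$ in the $W$-picture, which uses only the fact that $\pi_{W_j}$ vanishes on $W_j^{\perp}\supseteq\overline{\Span}\{W_j\}_{j\neq j_0}{}^{\perp}$ (applied for each $j\neq j_0$, using $W_j\subset\overline{\Span}\{W_i\}_{i\neq j_0}$).
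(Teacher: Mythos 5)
Your argument is correct, but the route is genuinely different from the paper's, most notably in how exactness is established. The paper reduces everything to the scalar setting: it chooses an orthonormal basis $\{e_{ij}\}_{i\in I_j}$ of each $W_j$, invokes Corollary 3.16 to conclude that $\{e_{ij}\}_{j\in J, i\in I_j}$ is a Riesz basis of $\mathcal{H}$ (hence an exact frame), and then cites Lemma 4.5 of Casazza--Kutyniok to transfer both the fusion frame property and exactness back to $\{(W_j,1)\}_{j\in J}$ in one stroke. You instead get the frame bounds from Corollary 3.18 and prove exactness directly and intrinsically: removing $W_{j_0}=TV_{j_0}$ leaves a family whose closed span equals $T\big(\overline{\Span}\{V_j\}_{j\neq j_0}\big)=T(V_{j_0}^{\perp})$, a proper closed subspace, so any nonzero vector in its orthogonal complement annihilates all the remaining projections and kills the lower fusion frame bound. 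This is self-contained, avoids the external transference lemma, and makes visible the geometric reason for exactness (each $W_{j_0}$ carries a direction invisible to the others); the paper's version is shorter given the machinery already assembled and delivers exactness of the fusion frame as a formal consequence of exactness of the associated frame. Your counterexample for the converse is a finite-dimensional analogue of the paper's: the paper takes $W_1=\overline{\Span}\{e_i\}_{i\geq 0}$ and $W_2=\overline{\Span}\{e_i\}_{i\leq 0}$ in $\ell^2(\mathbb{Z})$, while you take two overlapping coordinate planes in $\mathbb{C}^3$; both exploit the same phenomenon, namely that a nontrivial intersection $W_1\cap W_2\neq\{0\}$ destroys uniqueness of the decomposition and hence the $f$-basis property, while leaving an exact fusion frame intact. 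The only loose ends in your write-up are cosmetic: pairwise orthogonality of the $V_j$ follows from the definition of an orthonormal $f$-system rather than from Theorem 3.6, and the caveat $V_{j_0}\neq\{0\}$ is a degenerate case the paper also ignores (a family containing the zero subspace could never be exact anyway). Neither affects correctness.
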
\begin{proof} Let $\{e_{ij}\}_{i\in I_j}$ be an orthonormal basis for $W_j$ for all
$j\in J$. Then by Theorem 3.16, $\{e_{ij}\}_{j\in J, i\in I_j}$ is a Riesz basis for $\mathcal{H}$ and hence it is an exact frame. Now by [3, Lemma 4.5]
$\{(W_j, 1)\}_{j\in J}$ is a $1$-uniform exact fusion frame for $\mathcal{H}$. For the opposite implication is not valid suppose that $\{e_i\}_{i\in\mathbb{Z}}$
is an orthonormal basis for $\mathcal{H}$ and define the subspaces $W_1$ and $W_2$ by $$W_1=\overline{\Span}\{e_i\}_{i\geq 0}\;\;\;\;\text{and}\;\;\;\;
W_2=\overline{\Span}\{e_i\}_{i\leq 0}.$$ Then $\{(W_1, 1) , (W_2, 1)\}$ is a $1$-uniform exact fusion frame but is not a Riesz $f$-basis for $\mathcal{H}$.
\end{proof}

\section{The Stability of $f$-bases under perturbations}

The stability of bases is important in practice and is therefore studied widely by many authors, e.g., see [13]. In this section we study the stability of
$f$-bases for a Hilbert space $\mathcal{H}$. First we generalized a result of Paley-Wiener [13] to the situation of $f$-basis.
\begin{theorem} Let $\{W_j\}_{j\in J}$ be a $f$-basis for $\mathcal{H}$ and suppose that $\{V_j\}_{j\in J}$ is a family of closed subspaces of $\mathcal{H}$
such that $$\big\|\sum_{j\in F}(g_j-f_j)\big\|\leq\lambda\big\|\sum_{j\in F}g_j\big\|$$ for some constant $\lambda,\;0\leq\lambda<1$ and any finite subset
$F\subset J$ and arbitrary vectors $g_j\in W_j, f_j\in V_j$. Then $\{V_j\}_{j\in J}$ is a $f$-basis for $\mathcal{H}$.\end{theorem}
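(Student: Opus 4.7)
I aim to construct a bounded invertible operator $T$ on $\mathcal{H}$ satisfying $T(W_j)=V_j$ for every $j\in J$, and then to invoke Theorem 3.7 to conclude that $\{V_j\}_{j\in J}$ is an $f$-basis for $\mathcal{H}$. Since $\{W_j\}_{j\in J}$ is itself an $f$-basis, every $h\in\mathcal{H}$ admits the unique representation $h=\sum_{j\in J}P_{W_j}h$, which supplies a canonical domain on which to define $T$.

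I would first read the hypothesis as furnishing, for each $g_j\in W_j$, a matched vector $f_j\in V_j$; this yields linear correspondences $L_j:W_j\to V_j$ satisfying
$$\Big\|\sum_{j\in F}(g_j-L_j g_j)\Big\|\leq\lambda\Big\|\sum_{j\in F}g_j\Big\|$$
for every finite $F\subset J$ and every choice of $g_j\in W_j$. Set $Sh:=\sum_{j\in J}(P_{W_j}h - L_j P_{W_j}h)$ and $T:=I-S$. On any finite partial sum $h_F=\sum_{j\in F}P_{W_j}h$ the hypothesis reads $\|Sh_F\|\leq\lambda\|h_F\|$, and because the net $\{h_F\}$ is Cauchy in $\mathcal{H}$ (as $h_F\to h$) and $S$ shrinks differences by a factor at most $\lambda$, the net $\{Sh_F\}$ is Cauchy as well. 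Hence $S$ extends continuously to all of $\mathcal{H}$ with $\|S\|\leq\lambda<1$, so $T=I-S$ is bounded invertible, with inverse given by the Neumann series $T^{-1}=\sum_{n\geq 0}S^n$.

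By construction $T|_{W_j}=L_j$ takes values in $V_j$, hence $T(W_j)\subseteq V_j$. For the reverse inclusion I would use invertibility of $T$ to obtain $\mathcal{H}=T\mathcal{H}=T\,\overline{\Span}\{W_j\}_{j\in J}\subseteq\overline{\Span}\{V_j\}_{j\in J}$, so $\{V_j\}_{j\in J}$ is complete; then, since $\{T(W_j)\}_{j\in J}$ is already an $f$-basis by Theorem 3.7 applied to the system $\{(W_j,P_{W_j})\}_{j\in J}$ and the operator $T$, any $v_j\in V_j$ must coincide with its (unique) expansion along $\{T(W_i)\}_{i\in J}$, which by the biorthogonality relation $P_{W_i}P_{W_j}=\delta_{ij}P_{W_j}$ forces $v_j\in T(W_j)$. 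Thus $T(W_j)=V_j$, and Theorem 3.7 delivers the $f$-basis system $\{(V_j,TP_{W_j}T^{-1})\}_{j\in J}$.

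The main obstacle is the first step: extracting well-defined linear pairings $L_j:W_j\to V_j$ from the inequality and verifying that the formula for $S$ on finitely supported $W_j$-expansions extends continuously to a bounded operator on all of $\mathcal{H}$. Once $\|I-T\|\leq\lambda<1$ is in hand, invertibility via the Neumann series and the appeal to Theorem 3.7 are routine, and the identification $T(W_j)=V_j$ follows from the completeness argument above.
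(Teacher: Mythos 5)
Your overall route is the same as the paper's: build a perturbation operator $S$ with $\|S\|\le\lambda<1$, invert $Id_{\mathcal{H}}-S$ by a Neumann series, and feed the resulting invertible operator into Theorem 3.7. But the proposal has a genuine gap at precisely the step you yourself flag as ``the main obstacle'' and then leave unresolved: the construction of the linear pairings $L_j:W_j\to V_j$. The hypothesis does not ``furnish a matched vector'' $f_j$ for each $g_j$; it is a universally quantified inequality over all $f_j\in V_j$, so no pairing is handed to you, and without a concrete, linear choice of $L_j$ the operator $S$ is simply not defined and nothing downstream can be checked. The paper's (implicit) fix is the one missing idea: since the inequality holds for \emph{arbitrary} $f_j\in V_j$, specialize to $f_j=\pi_{V_j}g_j$, i.e.\ take $L_j$ to be the orthogonal projection onto $V_j$ restricted to $W_j$. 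This choice is automatically linear and bounded, the inequality $\big\|\sum_{j\in F}(g_j-\pi_{V_j}g_j)\big\|\le\lambda\big\|\sum_{j\in F}g_j\big\|$ holds on every finite sum, and your limiting argument then does extend $Sf=\sum_{j}(P_{W_j}f-\pi_{V_j}P_{W_j}f)$ to a bounded operator with $\|S\|\le\lambda$. With that specialization inserted, the rest of your Neumann-series argument coincides with the paper's proof.

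A second, smaller defect is the identification $(Id_{\mathcal{H}}-S)W_j=V_j$ needed to apply Theorem 3.7 to $\{V_j\}_{j\in J}$ itself. You correctly obtain $(Id_{\mathcal{H}}-S)W_j\subseteq V_j$ and completeness of $\{V_j\}_{j\in J}$, but your argument for the reverse inclusion does not work: the unique expansion of a vector $v_j\in V_j$ along the $f$-basis $\{(Id_{\mathcal{H}}-S)W_i\}_{i\in J}$ gives no reason for that expansion to have only its $j$-th term nonzero (asserting so is circular), and the relation $P_{W_i}P_{W_j}=\delta_{ij}P_{W_j}$ constrains the $W$'s, not the $V$'s. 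With the projection choice one has $(Id_{\mathcal{H}}-S)W_j=\pi_{V_j}(W_j)$, a closed subspace of $V_j$ that need not a priori equal $V_j$; closing this requires going back to the perturbation hypothesis (e.g.\ with $F=\{j\}$) to exclude a nonzero $v\in V_j$ orthogonal to $\pi_{V_j}(W_j)$. The paper asserts $(Id_{\mathcal{H}}-T)W_j=V_j$ without proof, so this point is a weakness shared with the source, but the specific justification you offer for it is not correct as written.
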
\begin{proof}
For all sequences $\{g_j:\;g_j\in W_j\}_{j\in J}$ and $\{f_j:\;f_j\in V_j\}_{j\in J}$ it follows by assumption that the series $\sum_{j\in J}(g_j-f_j)$ is
convergent whenever the series $\sum_{j\in J}g_j$ is convergent. Define the mapping $$T:\mathcal{H\rightarrow\mathcal{H}},\;\;\;\;\;\;
\text{with}\;\;\;\;\; T\pi_{W_j}f=\pi_{W_j}f-\pi_{V_j}f\;\;\;\;\;\forall f\in\mathcal{H},\;\;j\in J.$$
Let $f=\sum_{j\in J}g_j$ be an arbitrary element of $\mathcal{H}$ for some sequence $\{g_j:\;g_j\in
W_j\}_{j\in J}$, then \begin{align*}\|Tf\|=\big\|\sum_{j\in J}(g_j-\pi_{V_j}g_j)\big\|\leq \lambda\|f\|.\end{align*} It follows that $T$ is a bounded
linear operator on $\mathcal{H}$ and $\|T\|\leq\lambda<1$. Thus the operator $Id_{\mathcal{H}}-T$ is invertible. Since $(Id_{\mathcal{H}}-T)W_j=V_j$ for every
$j\in J$ hence by Theorem 3.7 the result follows.\end{proof} A family of subspaces $\{W_j\}_{j\in J}$ of $\mathcal{H}$ is called minimal, if $W_i\cap
\overline{\Span}_{j\in J, i\neq j}\{W_j\}=\{0\}$ for all $i\in J$.
\begin{proposition} Let $\{W_j\}_{j\in J}$ be a sequence of closed subspaces of $\mathcal{H}$. Then\begin{itemize}\item [$(i)$] $\{W_j\}_{j\in J}$
has a $f$-biorthogonal sequence, if and only if it is minimal.\item [$(ii)$] The $f$-biorthogonal sequence of $\{W_j\}_{j\in J}$ is unique if
and only if it is complete.\end{itemize}\end{proposition}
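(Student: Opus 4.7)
The plan is to treat parts (i) and (ii) separately. In each case one direction is routine (continuity or a short uniqueness argument) while the other requires a construction or a perturbation.

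For the forward implication of (i), I would start from an $f$-biorthogonal sequence $\{Q_j\}_{j\in J}$ and fix $i\in J$. For any $f\in W_i\cap\overline{\Span}_{j\in J,\,j\neq i}\{W_j\}$, membership in $W_i$ gives $Q_if=f$, while expressing $f$ as a norm-limit of finite sums $\sum_{j\in F}g_j$ with $g_j\in W_j$ and $i\notin F$ gives $Q_if=0$ by continuity of $Q_i$ together with $Q_ig_j=0$ for $j\neq i$. Thus $f=0$ and minimality is forced.

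For the converse of (i), assume $\{W_j\}_{j\in J}$ is minimal and fix $i\in J$. Set $V_i=\overline{\Span}_{j\in J,\,j\neq i}\{W_j\}$, so that $W_i\cap V_i=\{0\}$. Pick any orthonormal basis $\{e_{i,\alpha}\}_{\alpha\in I_i}$ of $W_i$; combining minimality with linear independence of the $e_{i,\beta}$ shows that $e_{i,\alpha}\notin\overline{\Span}\bigl(V_i\cup\{e_{i,\beta}:\beta\neq\alpha\}\bigr)$ for every $\alpha$. Hahn--Banach then produces bounded functionals $\psi_{i,\alpha}$ with $\psi_{i,\alpha}(e_{i,\beta})=\delta_{\alpha\beta}$ and $\psi_{i,\alpha}|_{V_i}=0$. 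Setting $Q_if=\sum_{\alpha}\psi_{i,\alpha}(f)e_{i,\alpha}$, the biorthogonality $Q_ig_j=\delta_{ij}g_j$ is then immediate from the construction.

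For part (ii), if $\{Q_j\}$ and $\{Q'_j\}$ are two $f$-biorthogonal sequences, then $Q_i-Q'_i$ vanishes on every $W_k$ and hence, by continuity, on $\overline{\Span}\{W_j\}_{j\in J}$. Completeness makes this all of $\mathcal{H}$, forcing $Q_i=Q'_i$. Conversely, if $\{W_j\}_{j\in J}$ is not complete, pick a nonzero $h\in\bigl(\overline{\Span}\{W_j\}_{j\in J}\bigr)^{\perp}$, a fixed $i\in J$, and a nonzero $w\in W_i$, and set $Q'_i=Q_i+\langle\cdot,h\rangle w$. Since $\langle g_k,h\rangle=0$ for every $g_k\in W_k$, this rank-one perturbation preserves all the biorthogonal relations while changing $Q_i$, contradicting uniqueness.

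The main obstacle is in the converse of (i): turning the family of Hahn--Banach functionals $\{\psi_{i,\alpha}\}_\alpha$ into a single bounded operator into $W_i$. When $W_i$ is infinite-dimensional, convergence and boundedness of $Q_i$ require uniform control on $\|\psi_{i,\alpha}\|$. The cleanest way is to first construct a single bounded linear projection on the algebraic sum $W_i\oplus V_i$ (well-defined by minimality) and then extend it to a bounded operator on $\mathcal{H}$, which in general may need closedness of $W_i+V_i$ or an additional Hilbert-space argument exploiting $\pi_{W_i}$.
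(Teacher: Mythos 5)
Your forward implication in (i) and both halves of (ii) are sound; indeed your uniqueness argument (the difference $Q_i-Q'_i$ vanishes on each $W_k$, hence on the closed span) and your rank-one perturbation $Q_i+\langle\cdot,h\rangle w$ are cleaner than what the paper does. The genuine gap is in the converse of (i), exactly where you flag it, and it cannot be closed along the route you propose. First, the assertion that $e_{i,\alpha}\notin\overline{\Span}\bigl(V_i\cup\{e_{i,\beta}:\beta\neq\alpha\}\bigr)$ does not follow from minimality when $\dim W_i=\infty$: minimality only gives $W_i\cap V_i=\{0\}$, and the closure of $V_i+\overline{\Span}\{e_{i,\beta}\}_{\beta\neq\alpha}$ may be strictly larger than the algebraic sum and may contain $e_{i,\alpha}$. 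Second, even granting the functionals $\psi_{i,\alpha}$, Hahn--Banach provides no uniform bound on $\|\psi_{i,\alpha}\|$, so $Q_if=\sum_\alpha\psi_{i,\alpha}(f)e_{i,\alpha}$ need not converge, let alone lie in $B(\mathcal{H},W_i)$ as the definition of an $f$-biorthogonal sequence requires. Third, your suggested repair --- the algebraic projection of $W_i\oplus V_i$ onto $W_i$ along $V_i$, then an extension to $\mathcal{H}$ --- is bounded precisely when $W_i+V_i$ is closed, and minimality does not guarantee that.

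In fact the step fails outright without an extra hypothesis. Take $\mathcal{H}=\ell^2\oplus\ell^2$, $W_1=\ell^2\oplus\{0\}$, and $W_2=\{(x,Tx):x\in\ell^2\}$ with $T$ bounded, injective, with dense non-closed range. Both subspaces are closed, $W_1\cap W_2=\{0\}$, so $\{W_1,W_2\}$ is minimal. Yet if $Q_1\in B(\mathcal{H},W_1)$ satisfied $Q_1|_{W_1}=Id$ and $Q_1|_{W_2}=0$, then applying $Q_1$ to $(x,0)-(x,Tx)=(0,-Tx)$ would give $\|x\|\leq\|Q_1\|\,\|Tx\|$ for all $x$, forcing $T$ to be bounded below --- a contradiction. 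So bounded biorthogonal operators need not exist for a minimal family of infinite-dimensional subspaces; the implication holds as you argue it when each $W_i$ is finite-dimensional (a finite-dimensional plus a closed subspace is closed), but in general one needs the additional assumption that each sum $W_i+\overline{\Span}_{j\neq i}\{W_j\}$ is closed. For what it is worth, the paper's own proof takes a different route --- it asserts that minimality makes $\{W_j\}_{j\in J}$ an $f$-basis for $\mathcal{H}_0=\overline{\Span}\{W_j\}_{j\in J}$ and then sets $P_{W_j}=P'_{W_j}\pi_{\mathcal{H}_0}$ --- but that assertion is subject to the same counterexample, so your instinct that something nontrivial is being glossed over here is correct.
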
\begin{proof}
For the proof of $(i)$ suppose that $\{P_{W_j}\}_{j\in J}$ is a $f$-biorthogonal sequence of $\{W_j\}_{j\in J}$ and let
$f\in W_i\cap\overline{\Span}\{W_j\}_{j\in J,\atop{j\neq i}}$ for any given $i\in J$. Then there exists a sequence
$\{g_j:\;g_j\in W_j\}_{j\in J}$ such that $f=g_i=\sum_{j\in J,\atop{j\neq i}}g_j$. We also have \begin{align*} f=g_i=P_{W_i}g_i
=\sum_{j\in J,\atop{j\neq i}}P_{W_i}g_j=\sum_{j\in J,\atop{j\neq i}}\delta_{ij}g_j=0,\end{align*} which implies that $f=0$. That is,
$\{W_j\}_{j\in J}$ is minimal. For the opposite implication in $(i)$, suppose that $\{W_j\}_{j\in J}$ is minimal, and let
$\mathcal{H}_{0}=\overline{\Span}\{W_j\}_{j\in J}$, then $\{W_j\}_{j\in J}$ is a $f$-basis for $\mathcal{H}_{0}$.
Let $\{P'_{W_j}\}_{j\in J}$ be $f$-dual sequence of $\{W_j\}_{j\in J}$ for $\mathcal{H}_{0}$. If we define $P_{W_j}=P'_{W_j}\pi_{\mathcal{H}_{0}}$
for all $j\in J$. Then $\{P_{W_j}\}_{j\in J}$ is a $f$-biorthogonal sequence for $\{W_j\}_{j\in J}$.\par $(ii)$ Let $\{P_{W_j}\}_{j\in J}$ be a
$f$-biorthogonal sequence of $\{W_j\}_{j\in J}$. If $\{W_j\}_{j\in J}$ is not complete, then the sequence $\{Q_{W_j}\}_{j\in J}$ defined by $Q_{W_j}=P_{W_j}+P_{W_j}(Id_{\mathcal{H}}-\pi_{\mathcal{H}_{0}})$ for all $j\in J$, is a $f$-biorthogonal sequence for
$\{W_j\}_{j\in J}$. For the other implication in $(ii)$, assume that $\{W_j\}_{j\in J}$ is complete. If $\sum_{j\in J}g_j=0$
for any given sequence $\{g_j:\;g_j\in W_j\}_{j\in J}$, then for every $i\in J$ we have \begin{align*} g_i=\sum_{j\in J}\delta_{ij}g_j
=\sum_{j\in J}P_{W_i}g_j=P_{W_i}(\sum_{j\in J}g_j)=0.\end{align*} This shows that $\{W_j\}_{j\in J}$ is a $f$-basis for $\mathcal{H}$.
Now the conclusion follows from Proposition 2.10.\end{proof}
\begin{theorem} Let $\{(W_j, P_{W_j})\}_{j\in J}$ be a $f$-basis system for $\mathcal{H}$,
and suppose that $\{V_j\}_{j\in J}$ is a family of closed subspaces of $\mathcal{H}$. If $\{Q_{V_j}\}_{j\in J}$ is a $f$-biorthogonal sequence of
$\{V_j\}_{j\in J}$ such that $$\big\|\sum_{j\in F}(P_{W_j}f-Q_{V_j}f)\big\|\leq\lambda\big\|\sum_{j\in F}P_{W_j}f\big\|\;\;\;\;\;\forall f\in\mathcal{H}$$
for some constant $\lambda,\;0\leq\lambda<1$ and any finite subset $F\subset J$. Then $\{V_j\}_{j\in J}$ is a $f$-basis for $\mathcal{H}$.\end{theorem}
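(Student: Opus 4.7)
My plan is to model the argument directly on Theorem 4.1: use the perturbation hypothesis to build a bounded operator $T$ on $\mathcal{H}$ with $\|T\|\leq\lambda<1$, invert $Id_{\mathcal{H}}-T$ via the Neumann series, and then read off the $f$-basis property of $\{V_j\}_{j\in J}$ from the resulting identity $(Id_{\mathcal{H}}-T)f=\sum_{j\in J}Q_{V_j}f$.

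First I would define, for each $f\in\mathcal{H}$, the formal series $Tf=\sum_{j\in J}(P_{W_j}f-Q_{V_j}f)$. Let $F_n\subset J$ be an increasing sequence of finite sets along which the $f$-basis expansion $\sum_{j\in J}P_{W_j}f$ converges to $f$. For $F_m\subset F_n$, the hypothesis applied to the finite subset $F_n\setminus F_m$ yields
$$\Big\|\sum_{j\in F_n\setminus F_m}(P_{W_j}f-Q_{V_j}f)\Big\|\leq\lambda\Big\|\sum_{j\in F_n}P_{W_j}f-\sum_{j\in F_m}P_{W_j}f\Big\|,$$
and the right-hand side tends to zero. Hence the partial sums defining $Tf$ are Cauchy in $\mathcal{H}$, so $Tf$ is well-defined, and passing to the limit in $\|\sum_{j\in F_n}(P_{W_j}f-Q_{V_j}f)\|\leq\lambda\|\sum_{j\in F_n}P_{W_j}f\|$ gives $\|Tf\|\leq\lambda\|f\|$. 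Thus $T\in B(\mathcal{H})$ with $\|T\|\leq\lambda<1$, and $Id_{\mathcal{H}}-T$ is invertible.

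Next, since both $\sum_{j\in J}P_{W_j}f$ and $\sum_{j\in J}(P_{W_j}f-Q_{V_j}f)$ converge, so does $\sum_{j\in J}Q_{V_j}f$, and subtraction gives $(Id_{\mathcal{H}}-T)f=\sum_{j\in J}Q_{V_j}f$. Now the two ingredients of the $f$-basis definition follow. For existence of a $V_j$-expansion of an arbitrary $h\in\mathcal{H}$, I set $f=(Id_{\mathcal{H}}-T)^{-1}h$ and obtain $h=\sum_{j\in J}Q_{V_j}f$ with $Q_{V_j}f\in V_j$. For uniqueness, if $\sum_{j\in J}v_j=0$ with $v_j\in V_j$, continuity of the bounded projection $Q_{V_i}:\mathcal{H}\rightarrow V_i$ and the $f$-biorthogonality $Q_{V_i}v_j=\delta_{ij}v_j$ give $v_i=Q_{V_i}\big(\sum_{j\in J}v_j\big)=0$ for every $i\in J$.

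The only genuinely delicate point is the well-definedness of $Tf$, because the hypothesis controls $\sum_{j\in F}(P_{W_j}f-Q_{V_j}f)$ only in terms of $\sum_{j\in F}P_{W_j}f$ rather than in an a-priori Cauchy fashion; the trick is that the hypothesis is allowed for \emph{any} finite subset $F\subset J$, so applying it to the differences $F_n\setminus F_m$ transfers the Cauchy property of the original $f$-basis expansion of $f$ to the partial sums of $Tf$. Once this is secured, the Neumann series step and the uniqueness argument are routine, and no appeal to Theorem 3.7 is required.
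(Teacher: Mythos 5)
Your proof is correct, and its core mechanism is the same as the paper's: build the perturbation operator $T$ from the hypothesis, get $\|T\|\leq\lambda<1$, and invert $Id_{\mathcal{H}}-T$ by the Neumann series. Where you diverge is in both the setup and the endgame, and in both places your version is the more careful one. The paper simply writes ``define $TP_{W_j}f=P_{W_j}f-Q_{V_j}f$'' and asserts invertibility of $Id_{\mathcal{H}}-T$ without checking that the series $\sum_{j\in J}(P_{W_j}f-Q_{V_j}f)$ converges at all; your application of the hypothesis to the difference sets $F_n\setminus F_m$, transferring the Cauchy property of the partial sums of $\sum_{j}P_{W_j}f$, is exactly the missing justification, and you correctly identify it as the one delicate point. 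At the end, the paper claims $(Id_{\mathcal{H}}-T)W_j=V_j$ and invokes Theorem 3.7; but a priori one only gets $(Id_{\mathcal{H}}-T)W_j=Q_{V_j}(W_j)\subseteq V_j$, and surjectivity of $Q_{V_j}$ restricted to $W_j$ onto $V_j$ is not obvious from the hypotheses (it can be deduced, but only after one already knows $\{V_j\}_{j\in J}$ is an $f$-basis). Your route---reading existence directly off $(Id_{\mathcal{H}}-T)f=\sum_{j\in J}Q_{V_j}f$ and getting uniqueness from the boundedness and biorthogonality of the $Q_{V_j}$---sidesteps this entirely, at the mild cost of reproving the two $f$-basis axioms by hand rather than quoting Theorem 3.7. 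Both approaches reach the conclusion, but yours is self-contained and closes a gap the paper leaves open.
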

\begin{proof} Define the mapping $$T:\mathcal{H\rightarrow\mathcal{H}},\;\;\;\;\;\;\text{with}\;\;\;\;\; TP_{W_j}f=P_{W_j}f-Q_{V_j}
f\;\;\;\;\;\forall f\in\mathcal{H},\;\;j\in J.$$ Then the operator $Id_{\mathcal{H}}-T$ is invertible and $(Id_{\mathcal{H}}-T)W_j=V_j$ for every
$j\in J$ hence by Theorem 3.7 the result follows.\end{proof}

\vspace{5mm}
{\bf Acknowledgements:} The author express his gratitude to the referee for carefully reading of the manuscript and
giving useful comments.


\begin{thebibliography}{10}

\bibitem{} M. S. Asgari, New characterizations of fusion frames (frames of subspaces), {\it Proc. Indian
Acad. Sci. (Math. Sci.)} {\bf 119} no. 3 (2009), 1-14.
\bibitem{} M. S. Asgari, On the stability of fusion frames (frames of subspaces),
{\it Acta Math. Sci. Ser. B}, {\bf 31}(4), (2011), 1633-1642.
\bibitem{} P. G. Casazza and G. Kutyniok, Frames of subspaces, in "Wavelets, Frames and Operator Theory"
(College Park, MD, 2003), {\it Contemp. Math. } {\bf 345}, {\it Amer. Math. Soc.}
Providence, RI, 2004, 87-113.
\bibitem{} O. Christensen, An Introduction to frames and Riesz Bases, Birkhauser, Boston, 2003.
\bibitem{} I. Daubechies, A. Grossmann and Y. Meyer, Painless nonorthogonal
expansions, {\it J. Math. Phys. } {\bf 27}(1986), 1271-1283.
\bibitem{} R. J. Duffin and A. C. Schaeffer, A class of nonharmonic Fourier
series, {\it Trans. Amer. Math. Soc.} {\bf 72},{\bf (2)}, (1952), 341-366.
\bibitem{} M. Fornasier, Quasi-orthogonal decompositions of structured
frames, {\it J. Math. Anal. Appl. } {\bf 289} (2004), 180-199.
\bibitem{} P. Gavruta, On the duality of fusion frames, {\it J. Math.
Anal. Appl. } {\bf 333} (2007), 871-879.
\bibitem{} J.R. Holub, Pre-frame operators, Besselian frames and near-Riesz bases in
Hilbert spaces, {\it Proc. Amer. Math. Soc.}
{\bf 122} (1994) 779-785.
\bibitem{} V. Kaftal, D. R. Larson and Sh. Zhang, Operator-valued frames, {\it Trans. Amer. Math. Soc.}
{\bf 361} (2009), 6349-6385.
\bibitem{} W. Rudin, Functional Analysis, McGraw–Hill. Inc, New York, (1991).
\bibitem{} W. Sun, G-frames and G-Riesz bases, {\it J. Math. Anal.
Appl.} (2006), {\bf 322}, 437-452.
\bibitem{} R. Young, An Introduction to Nonharmonic Fourier Series, Academic Press, New York, 2001.
















\end{thebibliography}
\end{document}